\definecolor{linkred}{rgb}{0.7,0.2,0.2}
\theoremstyle{plain}
\numberwithin{equation}{section}
\newtheorem{theorem}{Theorem}[section]
\newtheorem{prop}[theorem]{Proposition}
\newtheorem{corollary}[theorem]{Corollary}
\newtheorem{lemma}[theorem]{Lemma}
\newtheorem*{mainresult}{Main Theorem}
\theoremstyle{definition}
\newtheorem{definition}[theorem]{Definition}
\newtheorem{remark}[theorem]{Remark}
\def\ra{\rightarrow}
\newcommand{\gitq}{/\hspace{-0.25pc}/}
\renewcommand{\ss}{\mathrm{ss}}
\def\dra{\dashrightarrow}
\def\co{\colon\thinspace} %macro to use in f\co \rightarrow Y 
\DeclareMathOperator{\spec}{Spec}
\DeclareMathOperator{\proj}{Proj}
\DeclareMathOperator{\SHom}{\mathit{\mathcal{H}om}}
\DeclareMathOperator{\Hilb}{Hilb}
\DeclareMathOperator{\Tl}{\mathcal{T}}
\DeclareMathOperator{\Def}{Def}
\def\Hn1{\mathcal{H}_{n,1}}
\def\C{\mathcal{C}}
\def\D{\mathcal{D}}
\def\O{\mathcal{O}}
\def\M{\overline{M}}
\newcommand\Mg[1]{\overline{\mathcal{M}}_{#1}}
\def\N{\mathcal{N}}
\def\X{\mathcal{X}}
\def\ZZ{\mathbb{Z}}
\def\PP{\mathbb{P}}
\def\ZZ{\mathbb{Z}}
\def\FF{\mathbb{F}}
\def\CC{\mathbb{C}}
\def\HH{\mathrm{H}}
\DeclareMathOperator\SL{SL}
\DeclareMathOperator\PGL{PGL}
\DeclareMathOperator\Aut{Aut}
\DeclareMathOperator\Bl{Bl\,}
\def\nb{\nobreakdash}
\begin{document}
\title[The final log canonical model of $\M_4$]{The final log canonical model of 
the moduli space of stable curves of genus four}
\author{Maksym Fedorchuk}
\address{Department of Mathematics, Columbia University, 2990 Broadway, New York, NY 10027}
\curraddr{}
\email{mfedorch@math.columbia.edu}
\date{}

\begin{abstract}
We describe the GIT quotient of the linear system of $(3,3)$ curves on 
$\PP^1\times\PP^1$ as the final non-trivial log canonical model of $\M_4$, 
isomorphic to $\M_4(\alpha)$ for $8/17<\alpha\leq 29/60$. We describe singular curves parameterized by 
$\M_4(29/60)$, and show that the rational map $\M_4\dashrightarrow \M_4(29/60)$ 
contracts the Petri divisor, in addition to the boundary divisors $\Delta_1$ and $\Delta_2$. 
This answers a question of Farkas.
\end{abstract}

\maketitle

\section{Introduction}
The goal of this note is twofold. One is to show
that the Petri divisor on $\M_4$ is contracted by 
a rational contraction, thus answering a question of Farkas.  
Second is to describe the final non-trivial log canonical model 
appearing in the Hassett-Keel log minimal model program for $\M_4$, thus 
confirming various predictions obtained in \cite{afs} for when 
singular curves replace curves with special linear systems.

We now describe each of these goals in more detail: It is well-known 
that the hyperelliptic divisor in $\M_{3}$ is contracted
by the rational map to the final non-trivial log canonical model of $\M_{3}$
given by the GIT quotient of plane quartics;
see \cite{hyeon-lee_genus3}. Farkas has observed 
\cite[p.281]{farkas-gieseker} that on $\M_{4}$ 
there is no rational contraction, well-defined away from 
the hyperelliptic locus, that contracts the Petri divisor $P\subset \M_4$. 
Subsequently, Farkas asked \cite{farkas-personal} 
whether there are rational contractions, 
necessarily with a larger indeterminacy locus, 
that do contract $P$. Here, we answer this question in affirmative:
The rational map to the GIT quotient of $(3,3)$ curves on 
$\PP^1\times\PP^1$ contracts the Petri divisor to a point. This map  
is undefined both along the hyperelliptic locus {\em and} the locus of irreducible nodal curves with 
a hyperelliptic normalization. 

Our second goal is to describe the final non-trivial step in the Hassett-Keel log MMP
program for $\Mg{4}$ and to verify that it satisfies the modularity principle of \cite{afs}. 
The aim of the Hassett-Keel program for $\Mg{g}$ is to find an open substack $\Mg{g}(\alpha)$ in 
the stack of all complete genus $g$ curves such that $\Mg{g}(\alpha)$ has a good moduli space isomorphic
to the log canonical model 
\begin{align*}
\M_g(\alpha):=\proj \bigoplus_{m \ge 0} \HH^0(\Mg{g}, \lfloor m(K_{\Mg{g}} + \alpha \delta)\rfloor).
\end{align*}

As of this writing, the Hassett-Keel program for $\Mg{4}$ has been 
carried out for $\alpha\geq 2/3$ using GIT of 
the Hilbert and Chow schemes of bicanonically embedded curves 
and the threshold values
at which $\M_4(\alpha)$ changes are  $\alpha=9/11, 7/10, 2/3$
\cite{hassett-hyeon_flip, hassett-hyeon_contraction, hyeon-lee_genus4}. 
In particular, Hyeon and Lee \cite{hyeon-lee_genus4} construct a small contraction $\M_4(7/10-\epsilon) \ra \M_4(2/3)$
of the locus of Weierstrass genus $2$ tails (i.e. curves $C_1\cup_p C_2$ where $C_1$ and $C_2$ are genus two curves meeting in a node 
$p$ such that $p$ is a Weierstrass point of $C_1$ or $C_2$) using Kawamata basepoint freeness theorem. 
An alternative approach to the Hassett-Keel program for $\Mg{g}$ and to a functorial construction of the log canonical model $\M_g(2/3)$  
is pursued by Alper, Smyth, and van der Wyck in \cite{asw}. They
define a moduli stack $\Mg{g}(A_4)$ of genus $g$ curves with at worst $A_4$ singularities and no Weierstrass genus $2$ tails and show
that it is weakly proper without using GIT.
Once the existence and the projectivity of a good moduli space of $\Mg{g}(A_4)$ is established, 
$\Mg{g}(A_4)$ will give a modular interpretation of the log canonical model
$\M_g(2/3)$ for every $g\geq 4$.

We note that there are other threshold values at which $\M_{4}(\alpha)$ changes for $0<\alpha< 2/3$; these 
can be easily obtained from \cite{afs}. One of them is $\alpha=5/9$ and the corresponding log canonical model has been completely 
described by Casalaina-Martin, Jensen, and Laza \cite{laza-et-al} as a GIT quotient of the Chow variety of canonically embedded genus $4$ curves. 
In a forthcoming work \cite{laza-et-al-2}, the same authors describe all log canonical models $\M_4(\alpha)$
that arise for $29/60<\alpha<5/9$ by using VGIT 
on the parameter space of $(2,3)$ complete intersections in $\PP^3$ and show
that the only threshold value in the interval $(29/60, 5/9)$ is $\alpha=1/2$, which agrees with 
predictions of \cite{afs}. 

Here, we describe the final step in the Hassett-Keel program for 
$\Mg{4}$ which %, as we show, 
is given by a natural GIT quotient of the linear system of $(3,3)$ curves on $\PP^1\times \PP^1$:
A starting point is the classical observation 
that a canonical embedding of a non-hyperelliptic smooth curve 
of genus $4$ lies on a unique 
quadric in $\PP^3$. If the quadric is smooth, 
the curve is called {\em Petri-general}, 
and is realized as an element of the linear system
$$V:=\vert\O_{\PP^1\times\PP^1}(3,3)\vert\simeq \PP^{15}$$
of $(3,3)$-curves on $\PP^1\times\PP^1$. 
Moreover, the uniqueness of a pair of $g^1_3$'s implies
that two smooth curves of class $(3,3)$ are
abstractly isomorphic if and only if they belong to the same 
$\Aut(\PP^1\times\PP^1)$-orbit. 
Conversely, a smooth genus $4$ curve is called {\em Petri-special} if its canonical image lies
on a singular quadric. Petri-special curves form a divisor whose
closure in $\Mg{4}$ is called the {\em Petri divisor};  we denote it by $P$. 

This said, we consider a linearly reductive group $G=\SL(2)\times\SL(2)\rtimes \ZZ_2$ that, while being 
a finite cover of $\Aut(\PP^1\times\PP^1)$, has the advantage of linearizing $\O_V(1)$. 
Then the GIT quotient $V^{\ss} \gitq G$ will be a birational model of $\M_4$ as soon as 
the general curve in $V$ is GIT stable, which is easy to verify. 
Our main result is:
\begin{mainresult}
The GIT quotient 
$M:=\vert\O_{\PP^1\times\PP^1}(3,3)\vert^{\ss} \gitq (\SL(2)\times\SL(2)\rtimes \ZZ_2)$
is isomorphic to $\M_{4}(\alpha)$ for $8/17<\alpha\leq 29/60$. 
The resulting 
birational contraction $f\co \M_4 \dra \M_{4}(29/60)$ contracts the Petri divisor $P$ to the point 
parameterizing triple conics, 
and contracts the boundary divisor $\Delta_2$ to the point 
parameterizing curves with two $A_5$ singularities. The 
hyperelliptic locus $\overline{H}_4$ is flipped by $f$ to the locus
{\em $$A:=\overline{\{\text{curves with an $A_8$ singularity}\}},$$}
i.e. the total transform of the generic point of $\overline{H}_4$ is $A$.
\end{mainresult}
Main Theorem is proved in Section \ref{S:proof-main-theorem}. We give a roadmap to its proof: 
That $f$ is a contraction is proved in Proposition \ref{P:contraction}.
That $f$ contracts $P$ and $\Delta_2$ to a point 
is proved in Theorem \ref{T:determinacy}. 
That $\overline{H}_4$ is flipped to $A$ is established in Theorem \ref{T:flip}. Finally, 
the identification of $M$ with log canonical models of $\M_4$ is made in Corollary \ref{logMMP}.

We also obtain a strengthening of the genus $4$ case of 
\cite[Theorem 5.1]{farkas-gieseker},
whose terminology we keep:
\begin{theorem}\label{T:moving-slope}
The moving slope of $\M_4$ is $60/7$.
\end{theorem}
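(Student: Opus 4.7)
The plan is to combine the Main Theorem with standard moving-slope techniques, in the style of \cite[Theorem 5.1]{farkas-gieseker}. Let $f\co \M_4 \dashrightarrow M$ denote the birational contraction furnished by the Main Theorem.

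For the upper bound $s_{\mathrm{mov}}(\M_4)\leq 60/7$: since $M$ is a GIT quotient of $\PP^{15}$ by a connected reductive group, $\Pic(M)\otimes \QQ$ is one-dimensional, generated by an ample class $H$, and $f^*H$ is a movable class on $\M_4$. At the upper threshold $\alpha=29/60$ of the interval from the Main Theorem, the Petri divisor $P$ is just beginning to be contracted, so $P$ does not appear in the decomposition of the log canonical class into movable and $f$-exceptional parts; thus
$$K_{\M_4}+(29/60)\delta \;=\; \nu\,f^*H + c_1\Delta_1 + c_2\Delta_2,$$
with $\nu, c_1, c_2>0$, the positivity of $c_1, c_2$ reflecting that $\Delta_1, \Delta_2$ have already been contracted in earlier steps of the Hassett--Keel program. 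Substituting $K_{\M_4}+(29/60)\delta = 13\lambda - (91/60)\delta_0 - (151/60)\delta_1 - (91/60)\delta_2$, one reads off that the $\delta_0$-coefficient of $\nu\,f^*H$ equals $-91/60$, while the $\delta_i$-coefficients for $i=1,2$ have strictly larger magnitude (absorbing $c_1,c_2>0$). Hence $s(f^*H) = 13\cdot 60/91 = 60/7$.

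For the lower bound $s_{\mathrm{mov}}(\M_4)\geq 60/7$: I would show that every movable class $D$ on $\M_4$ is a nonnegative rational multiple of $f^*H$. Since $\Pic(M)\otimes\QQ = \QQ H$, the pushforward $f_*D = kH$ for some $k \geq 0$, and thus the difference $D - k\,f^*H$ is numerically supported on the $f$-exceptional divisors, giving $D = k\,f^*H + c_PP + c_1\Delta_1 + c_2\Delta_2$ for rational $c_\bullet$. Each of $P, \Delta_1, \Delta_2$ is irreducible and contracted by $f$ to a subvariety of strictly smaller dimension, hence is rigid; a positive coefficient $c_E$ would force $E$ into the divisorial fixed locus of $D$, contradicting movability. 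Thus $c_P = c_1 = c_2 = 0$, $D = k\,f^*H$, and $s(D) = s(f^*H) = 60/7$.

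The main obstacle is the rigorous execution of the decomposition step in the lower bound: namely, establishing that $\M_4$ behaves sufficiently like a Mori dream space to write any movable effective class as a nonnegative combination of $f^*H$ and the $f$-exceptional divisors, and controlling the signs of the resulting coefficients. Both ingredients should follow from the Mori-theoretic structure of $\M_4$ endowed by the (now complete) Hassett--Keel program, together with the rank-one Picard group of the final log canonical model $M$.
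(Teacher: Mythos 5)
Your upper bound is essentially the paper's argument: since $\Pic(M)\otimes\QQ$ has rank one, pulling back an ample class under the birational contraction $f$ gives a moving divisor, and the slope works out to $60/7$. (The paper gets there more directly: by Proposition \ref{L:discrepancy}, $f^*(60\lambda-7\delta)=60\lambda-7\delta_0-24\delta_1-30\delta_2$, and $60\lambda-7\delta=\O_M(2)$ is ample by Corollary \ref{C:lambda-delta}; your detour through $K_{\M_4}+\tfrac{29}{60}\delta$ reaches the same class but needs the discrepancy computation anyway to justify the signs of $c_1,c_2$.)

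The lower bound, however, has a genuine gap: the claim that every movable class is a nonnegative multiple of $f^*H$ is false. The movable cone of $\M_4$ is full-dimensional (it contains the ample cone), so it cannot be a single ray. The error is in the treatment of signs: rigidity of the $f$-exceptional divisors $P,\Delta_1,\Delta_2$ can at best rule out \emph{positive} coefficients $c_E$ in the decomposition $D=k\,f^*H+c_PP+c_1\Delta_1+c_2\Delta_2$; negative coefficients are perfectly compatible with movability. For instance, $\lambda=f^*f_*\lambda-7P-\Delta_1-3\Delta_2$ by Proposition \ref{L:discrepancy}, and $\lambda$ is semi-ample, hence movable, yet not proportional to $f^*H$. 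So the conclusion $c_P=c_1=c_2=0$ fails, and with $c_P<0$ allowed you must still verify the slope inequality by computation. Moreover, even excluding $c_P>0$ is exactly the nontrivial content: since $7a-60b_0=-119c_P+120c_P\cdot(\ldots)$ reduces to $-c_P$ in this decomposition, ruling out $c_P>0$ is equivalent to the inequality you are trying to prove. The paper supplies the missing ingredient concretely: the family $T_3$ of Petri-special curves $y^3=x^6+axyz^3+bz^6$ on the quadric cone is a \emph{covering family} for $P$ with $P\cdot T_3=-1$, $\lambda\cdot T_3=7$, $\delta_0\cdot T_3=60$, $\delta_1\cdot T_3=\delta_2\cdot T_3=0$; any moving divisor $D=a\lambda-b_0\delta_0-b_1\delta_1-b_2\delta_2$ must satisfy $D\cdot T_3\geq 0$, i.e. $7a-60b_0\geq 0$, which is the lower bound $a/b_0\geq 60/7$. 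Your appeal to a Mori dream space structure is neither available in the paper nor sufficient to repair the sign issue; the covering-family computation is what actually closes the argument.
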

Theorem \ref{T:moving-slope} is proved in Corollary \ref{C:moving-cone}.

\begin{remark} A birational contraction of the Petri divisor inside $\M_{4,1}$ is
constructed in \cite{jensen-genus4} using GIT on the universal curve over $V$.
However, Jensen does not give a modular interpretation to this contraction.
\end{remark}

\subsection{Preliminaries}
We recall some 
definitions and results that will be used throughout this work.
We work over $\CC$.
\subsubsection{Varieties of stable limits}
Let $C$ be an l.c.i. integral curve of arithmetic genus $g$ 
and $p\in C$ be a singular point. Recall that the {\em variety of 
stable limits of $C$} is the closed subvariety 
$\Tl_{C}\subset \M_g$ consisting 
of stable limits of  all possible smoothings of $C$. Namely,
if $\Def(C) \stackrel{p}{\longleftarrow} Z 
\stackrel{q}{\longrightarrow} \M_g$ is the graph of the rational moduli 
map $\Def(C)\dra \M_g$, 
then we set $\Tl_{C}:=q(p^{-1}(0))$. 

Suppose that $p\in C$ is the only singularity. Let $b$ be the number of
branches of $p\in C$ and 
$\delta(p)=\dim_{\CC} \O_{\widetilde{C}}/\O_{C}$ be the $\delta$\nb-invariant.
Then curves in $\Tl_{C}$ are of the form $\widetilde{C} \cup T$, 
where $(\widetilde{C}, q_1, \dots, q_b)$ is the pointed normalization of $C$ and 
$(T,p_1,\dots, p_b)$ is a $b$-pointed curve of arithmetic genus 
$\gamma=\delta(p)-b+1$. 
%(here, $(T,p_1,\dots, p_b)$ is attached to $(\tilde{C}, q_1, \dots, q_b)$ %by identifying $p_i$ with $q_i$).
The pointed stable curve
$(T, p_1,\dots, p_b)$ is called the {\em tail of a stable limit}. 
Tails of stable limits are independent of $\widetilde{C}$
and depend only on $\hat{\O}_{C,p}$. 
It follows that we can define the variety of tails of stable limits of 
$\hat{\O}_{C,p}$ as a closed subvariety 
$\Tl_{\hat{\O}_{C,p}}\subset \M_{\gamma,b}$ 
(see \cite[Proposition 3.2]{Hassett-stable}). 

We recall the following results concerning 
the varieties of tails of stable limits of $A$ and $D$ singularities
(see \cite[Sections 6.2, 6.3]{Hassett-stable} and \cite{fedorchuk-AD}).
\begin{prop}[Varieties of stable limits of AD singularities]
\label{P:stable-limits}
\begin{enumerate}
\item[]
\item[$\mathrm{(A_{\text{odd}})}$]\label{Aodd} The variety of 
tails of stable limits of the $A_{2k+1}$ singularity $y^2=x^{2k+2}$ 
is the locus of $(C,p_1,p_2) \in \M_{k, 2}$ 
such that a semistable model $C'$ of $C$ admits
an admissible hyperelliptic cover $\varphi\co C'\ra R$, where $R$ is a rational nodal curve, 
and $\varphi(p_1)=\varphi(p_2)$.
\item[$\mathrm{(A_{\text{even}})}$]\label{Aeven}
The variety of 
tails of stable limits of the $A_{2k}$ singularity $y^2=x^{2k+1}$ 
is the locus of $(C,p) \in \M_{k, 1}$ such that a semistable model $C'$ of $C$ admits
an admissible hyperelliptic cover $\varphi\co C'\ra R$, where $R$ is a rational nodal curve, 
and $p$ is a ramification point of $\varphi$.
\item[$\mathrm{(D_{\text{odd}})}$] The variety of 
tails of stable limits of the $D_{2k+1}$ singularity $x(y^2-x^{2k-1})=0$ 
is the locus $(C,p_1,p_2) \in \M_{k, 2}$ such that 
$p_1\neq p_2$
and the stabilization of $(C,p_1)$ is as in 
{\em ($\mathrm{A}_{\text{even}}$)}.
\item[$\mathrm{(D_{\text{even}})}$]
The variety of 
tails of stable limits of the $D_{2k}$ singularity $x(y^2-x^{2k-2})=0$ 
is the locus $(C,p_1,p_2, p_3) \in \M_{k-1, 3}$ such that 
$p_3\notin \{p_1,p_2\}$
and the stabilization of $(C,p_1,p_2)$ is as in 
{\em ($\mathrm{A}_{\text{odd}}$)}.

\end{enumerate}
\end{prop}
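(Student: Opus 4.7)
The plan is to verify each of the four cases by explicit semistable reduction on a versal one-parameter smoothing of the analytic singularity, and then to identify the resulting stable tail with the admissible hyperelliptic configuration described in the statement. By the Hassett reference cited just above, the variety of tails depends only on $\hat\O_{C,p}$, so I may replace the global curve by any convenient realization of the prescribed singularity type.

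For the $A_n$ cases I would take a smoothing of the form $y^2 = f(x,t)$ with $f(x,0) = x^{n+1}$ and with $f(\cdot, t)$ having distinct roots for $t \neq 0$. After an appropriate base change in $t$ and a sequence of weighted blowups along the singular locus, the total space becomes smooth, and its central fiber consists of the normalization $\widetilde{C}$ glued to a hyperelliptic curve $T$ of arithmetic genus $\lfloor n/2 \rfloor$, realized as the double cover of $\PP^1$ branched along the rescaled roots of $f$. When $n = 2k$ the odd cardinality of the branch locus forces the attaching point to be the unique Weierstrass point over infinity, giving case $(A_{\text{even}})$; when $n = 2k+1$ the two hyperelliptic-conjugate preimages of infinity are exactly the attaching points $p_1, p_2$, giving $(A_{\text{odd}})$. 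Non-generic smoothings in which the roots of $f(\cdot, t)$ cluster produce admissible hyperelliptic covers of rational nodal curves, and conversely every such admissible cover is realized by some smoothing via a standard deformation-theoretic argument on the versal family of the hyperelliptic admissible cover.

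For the $D_n$ cases I would analyze the two (respectively three) branches of the singularity simultaneously: the $A$-part (of type $A_{n-2}$) contributes a hyperelliptic tail exactly as in the $A$-analysis, while the transverse line branch $x = 0$ contributes an extra attaching point $p_2$ (a pair $p_2, p_3$ in the $D_{\text{even}}$ case, where the $A$-part itself has two branches) together with an extra rational bridge that accounts for the jump in the arithmetic genus of the tail. After stabilization, forgetting the line-branch markings recovers precisely the tail of the $A$-part, as the statement requires. The main obstacle is precisely this $D$-step: one must verify that the line branch imposes no hidden constraint on where the extra attaching points land, so that every configuration in $\M_{k,2}$ or $\M_{k-1, 3}$ satisfying the stated reduction condition is indeed realized. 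This is carried out in \cite{fedorchuk-AD} by an explicit description of the semistable model, followed by an irreducibility and dimension count in $\M_{\gamma, b}$ showing that the loci described on the right-hand side exhaust the full variety of tails.
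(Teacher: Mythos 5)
The paper's own ``proof'' of this proposition is a one-line citation to \cite{fedorchuk-AD}, so at the level of logical completeness your proposal --- which also defers the hard direction (that \emph{every} admissible hyperelliptic configuration is realized by some smoothing, and that nothing else occurs) to that same reference --- is on the same footing as the paper. Your semistable-reduction sketch for the $A_n$ cases is the standard and correct picture: the tail genus $\lfloor n/2\rfloor$, the parity of the branch locus, and the resulting ramified (resp.\ conjugate) attaching points all come out as you describe.

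Your description of the $D_n$ step, however, contains a genuine structural error. You assert that the $A$-part contributes ``a hyperelliptic tail exactly as in the $A$-analysis'' and that ``forgetting the line-branch markings recovers precisely the tail of the $A$-part.'' The $A$-part of $D_{2k+1}\co x(y^2-x^{2k-1})=0$ is the branch $y^2=x^{2k-1}$, an $A_{2k-2}$ singularity whose tails live in $\M_{k-1,1}$; but the statement being proved says that forgetting $p_2$ from a $D_{2k+1}$-tail yields a tail of $A_{2k}$, living in $\M_{k,1}$. (Likewise for $D_{2k}$: the $A$-part is $A_{2k-3}$ with tails in $\M_{k-2,2}$, while the statement produces $A_{2k-1}$-tails in $\M_{k-1,2}$.) The genus of the hyperelliptic tail jumps by one relative to the $A$-part because the line branch meets the other branch with intersection multiplicity $2$, raising $\delta$ by $2$; equivalently, the line feeds an extra pair of branch points into the hyperelliptic cover rather than hanging a separate rational bridge onto the $A$-part's tail. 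Your proposed ``$A$-part tail plus rational bridge'' is at best a boundary degeneration and cannot account for the generic tail, which is an irreducible smooth hyperelliptic curve of genus $k$ (resp.\ $k-1$) with the line-branch point(s) moving freely. (A smaller slip: the $A$-part of $D_n$ is $A_{n-3}$, not $A_{n-2}$.) If you want to keep a self-contained sketch, the $D$ cases need to be redone with the correct count of branch points of the limiting admissible cover, or else simply cited from \cite{fedorchuk-AD} as the paper does.
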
 
\begin{proof}
This is the content of 
\cite[Main Theorem 1(2) and Main Theorem 2(2)]{fedorchuk-AD}.
\end{proof}

\subsubsection{Deformations of curves on $\PP^1\times\PP^1$}
Suppose that $p_1,\dots, p_n$ are singular points of a reduced curve 
$C$. Then 
\begin{equation}\label{E:versality}
\Def(C)\ra \prod_{i=1}^{n} \Def(\hat{\O}_{C,p_i})
\end{equation}
is smooth because $\HH^2(C, \SHom(\Omega_C, \O_C))=(0)$. If $C$ is a $(3,3)$ curve
on $\PP^1\times \PP^1$, 
then in fact all deformations of $\hat{\O}_{C,p_i}$ are realized by embedded deformations of $C$:
\begin{prop}\label{P:versality} Let $C$ be a reduced curve in 
class $(3,3)$ on $\PP^1\times\PP^1$ and $p_1,\dots, p_n$ are singular points of $C$.
Then the natural map 
$\Hilb(\PP^1\times\PP^1) \ra \prod_{i=1}^{n} \Def(\hat{\O}_{C,p_i})$
is smooth.
\end{prop}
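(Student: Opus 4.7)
The plan is to establish smoothness by verifying three things: that $\Hilb(\PP^1\times\PP^1)$ is smooth at $[C]$, that the target $\prod_{i=1}^{n} \Def(\hat{\O}_{C,p_i})$ is smooth, and that the induced map on Zariski tangent spaces is surjective. Smoothness of the target is automatic, since the miniversal deformation space of each plane curve singularity $\hat{\O}_{C,p_i}$ is smooth. Writing $S=\PP^1\times\PP^1$ and $N_{C/S}=\O_S(3,3)|_C$, smoothness of $\Hilb(\PP^1\times\PP^1)$ at $[C]$ will follow from $\HH^1(N_{C/S})=0$, which is immediate from the restriction sequence $0\to\O_S\to\O_S(3,3)\to N_{C/S}\to 0$ together with the K\"unneth vanishings $\HH^1(\O_S(3,3))=\HH^2(\O_S)=0$.

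For the surjectivity on tangent spaces, I would apply $\SHom(-,\O_C)$ to the conormal sequence of $C\subset S$, using that $\Omega_S|_C$ is locally free, to obtain the four-term exact sequence
\begin{equation*}
0 \to T_C \to T_S|_C \to N_{C/S} \to T^1_C \to 0,
\end{equation*}
where $T^1_C=\Ext^1(\Omega_C,\O_C)$ and $\HH^0(T^1_C)=\bigoplus_{i} T^1_{\hat{\O}_{C,p_i}}$. Letting $N^{\mathrm{eq}}:=\mathrm{image}(T_S|_C\to N_{C/S})$ and splitting the above into the two short exact sequences
\begin{equation*}
0\to T_C\to T_S|_C\to N^{\mathrm{eq}}\to 0,\qquad 0\to N^{\mathrm{eq}}\to N_{C/S}\to T^1_C\to 0,
\end{equation*}
the second sequence reduces surjectivity of $\HH^0(N_{C/S})\to\HH^0(T^1_C)$ to the vanishing $\HH^1(N^{\mathrm{eq}})=0$, and the first sequence together with the triviality $\HH^2(T_C)=0$ (since $C$ is one-dimensional) reduces that further to $\HH^1(T_S|_C)=0$.

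The crux of the argument is therefore the vanishing $\HH^1(T_S|_C)=0$. Since $T_S=\O_S(2,0)\oplus\O_S(0,2)$, it suffices to show $\HH^1(\O_S(a,b)|_C)=0$ for $(a,b)\in\{(2,0),(0,2)\}$. For each such pair, the twisted restriction sequence $0\to\O_S(a-3,b-3)\to\O_S(a,b)\to\O_S(a,b)|_C\to 0$ reduces the claim to $\HH^1(\O_S(a,b))=\HH^2(\O_S(a-3,b-3))=0$, both of which hold by K\"unneth on $\PP^1\times\PP^1$. I expect no real obstacle beyond correctly identifying the equisingular normal sheaf $N^{\mathrm{eq}}$ in the four-term sequence; all of the cohomological input is routine on the quadric.
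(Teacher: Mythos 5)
Your proof is correct and follows essentially the same route as the paper: both arguments come down to the two vanishings $\HH^1(C,\N_{C/S})=0$ and $\HH^1(C,T_S\vert_C)=0$, which the paper cites as the sufficient condition for gluing local deformations and which you derive more explicitly via the four-term sequence $0\to T_C\to T_S\vert_C\to \N_{C/S}\to T^1_C\to 0$. The only cosmetic difference is that you compute the vanishings by the restriction sequence and K\"unneth, whereas the paper uses Serre duality and adjunction.
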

\begin{proof} This is standard. Set $X=\PP^1\times\PP^1$. Since
$\Hilb(X)$ and $\prod_{i=1}^{n} \Def(\hat{\O}_{C,p_i})$ can be taken to be of finite type over $\CC$, it 
suffices to establish formal smoothness. 
\begin{comment}
First, work affine locally:
Given an infinitesimal thickening $C_{A}\hra C_{A'}$ and the map $C_{A}\hra X\times A$, we have  
a map $C_{A} \ra X$ and a lift $C_{A'} \ra X$ by smoothness of $X$. It follows that local deformations
of $C$ all come from embedded deformations. 
We have $H^0(C, T_X\vert_{C})\Def(C\hra X) \ra \Def(C)\ra H^1(C, T_X\vert_{C}) \ra 
\end{comment}
Since $C\hookrightarrow X$ is 
locally unobstructed, it suffices to check that a collection 
of local first-order deformations can always be glued to a global embedded
deformation. A sufficient 
condition for this is the vanishing of $\HH^1(C, \N_{C/X})$ and $\HH^1(C, T_{X}\otimes \O_{C})$. 
We now compute:
\begin{align*}
\HH^1(C,\N_{C/X})&=\HH^0(C,\omega_{C}\otimes \N_{C/X}^\vee)^\vee 
 %=\HH^0(C,\omega_{C}(-C))^\vee=\HH^0(C, \omega_{\PP^1\times\PP^1}\vert_{C})^\vee
 =\HH^0(C, \O(-2,-2)\vert_{C})^\vee=(0),\\
 \HH^1(C, T_{X}\otimes \O_{C})&=\HH^1(C, \O_C(2,0)\oplus \O_C(0,2))
 =\HH^0(C, \O_C(-1,1)\oplus \O_C(1,-1))^\vee=(0).
\end{align*}
\begin{comment}
$\HH^1(C,\N_{C/X})$, where 
$\N_{C/X}=\bigl(I_{C/X}/I_{C/X}^2\bigr)^\vee=\O_C(C)$ is the normal 
bundle.  Using the Serre duality and the adjunction, we now compute
\begin{multline*}
\HH^1(C,\N_{C/X})=\HH^0(C,\omega_{C}\otimes \N_{C/X}^\vee)^\vee 
 =\HH^0(C,\omega_{C}(-C))^\vee \\
 =\HH^0(C, \omega_{\PP^1\times\PP^1}\vert_{C})^\vee=\HH^0(C, \O(-2,-2)\vert_{C})^\vee=(0).
\end{multline*}
\end{comment}
\end{proof}

\section{GIT of $(3,3)$ curves on $\PP^1\times \PP^1$}\label{S:GIT}

In this section, we classify semistable points of
$V:=\vert \O_{\PP^1\times\PP^1}(3,3) \vert$ under the 
action of $G:=(\SL(2)\times\SL(2))\rtimes \ZZ_2$ 
by applying the Hilbert-Mumford numerical criterion
\cite[Chapter 2.1]{GIT}. In Section \ref{S:numerical-criterion}
we describe equations of (semi)stable and nonsemistable points. 
The geometric consequences 
of these results are then collected in Section \ref{S:analysis}.
\subsection{Numerical criterion}\label{S:numerical-criterion}
Choose projective 
coordinates $X,Y, Z,W$ on $\PP^1\times\PP^1$.
Consider then one-parameter subgroup  
$\rho_{u,v}\co \spec \CC[t,t^{-1}]\ra G$ that acts via 
\[
t\cdot (X,Y,Z,W)=(t^uX,t^{-u}Y,t^vZ,t^{-v}W),
\] where we assume that 
$u\geq v\geq 0$. 

With respect to $\rho_{u,v}$, the monomial $X^iY^{3-i}Z^jW^{3-j}$ 
has weight $(2i-3)u+(2j-3)v$. The monomials of positive weight are those with:
\begin{enumerate}
\item $i=3, j\geq 1$. %x^3z, x^3z^2, x^3z^3
\item $i=3, j=0$ if $u>v$. %x^3
\item $i=2, j\geq 2$. %x^2z^2, x^2z^3,
\item $i=2, j=1$  if $u>v$. % x^2z
\item $i=2, j=0$ if $u>3v$. %x^2
\item $i=1, j=3$ if $u<3v$. %xz^3
\end{enumerate}
It follows that the general {\em nonsemistable}
point is nonsemistable either 
with respect to the one-parameter subgroup $\rho_{2,1}$ or 
with respect to $\rho_{4,1}$. We record that in the affine coordinates
$x=X/Y$ and $z=Z/W$
the general nonsemistable point with respect to $\rho_{2,1}$ 
has equation 
\begin{equation}\label{E:nonstable-21}
x(cz^3+a_0xz+a_1xz^2+a_2xz^3+b_1x^2+b_2x^2z+b_3x^2z^2+b_4x^2z^3)=0.
\end{equation}
Similarly, the general nonsemistable point with respect to $\rho_{4,1}$ is
\begin{equation}\label{E:nonstable-41}
x^2(1+a_0z+a_1z^2+a_2z^3+b_1x+b_2xz+b_3xz^2+b_4xz^3)=0.
\end{equation}

We proceed to describe strictly semistable points.
It is clear from the above list of monomials that the only
one-parameter subgroups $\rho_{u,v}$ with respect to which 
there are monomials of degree $0$ are $\rho_{1,1}$ and 
$\rho_{3,1}$. 
The degree $0$ monomials with respect to $\rho_{3,1}$ are
\begin{enumerate}
\item $X^2YW^3$, which become positive if $u>3v$.
\item $XY^2Z^3$, which becomes negative if $u>3v$.
\end{enumerate} 
Since there are only two monomials of weight $0$ when $u=3v$, any curve which is strictly semistable
with respect to $\rho_{3,1}$ and has a closed orbit is unique up to automorphisms of $\PP^1\times\PP^1$
and is defined by the equation
\begin{equation}\label{E:semistable31}
X^2YW^3+XY^2Z^3=XY(XW^3+YZ^3)=0.
\end{equation}
The degree $0$ monomials with 
respect to $\rho_{1,1}$ are
\begin{enumerate}
\item $X^3W^3$, 
which becomes positive for $u>v$ and negative for $u<v$.
\item $X^2YZW^2$, 
which becomes positive for $u>v$ and negative for $u<v$.
\item $XY^2Z^2W$, 
which becomes negative for $u>v$ and positive for $u<v$.
\item $Y^3Z^3$, 
which becomes negative for $u>v$ and positive for $u<v$.
\end{enumerate}
Thus, strictly semistable points with respect to $\rho_{1,1}$ 
have form
\begin{equation*}%\label{E:semistable-11}
ax^3+bx^2z+cxz^2+dz^3+ex^2z^2+fxz^3+gx^2z^3+hx^3z^3=0
\end{equation*}
Following such curves to the flat limit under $\rho_{1,1}$, we 
see that every strictly semistable point isotrivially specializes to a curve $ax^3+bx^2z+cxz^2+dz^3=0$, 
or, in projective coordinates, to a curve
\begin{equation}\label{E:semistable-11-closed}
L_1L_2L_3=aX^3W^3+bX^2W^2ZY+cXWZ^2Y^2+dZ^3Y^3=0,
\end{equation}
where $(a,b)$ are not simultaneously zero and $(c,d)$ are not simultaneously zero. 
(Here, $L_i$ are homogeneous forms of bidegree $(1,1)$.)
\begin{prop}\label{P:D-curves-1}
The orbit closure of every semistable curve with a multiplicity $3$ singularity 
contains its ``tangent cone,'' which is 
a curve described by Equation \eqref{E:semistable-11-closed}.
In particular, the orbit closure of every semistable curve with a 
$D_8$ singularity %$x(y^2-x^6)=0$ 
contains a double conic, i.e.
a curve defined by $(ax+bz)^2(cx+dz)=0$.
%, i.e. the only semistable curves with a multiplicity $3$ singularity and a closed orbit
%is a $D$ curve.
\end{prop}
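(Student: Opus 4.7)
The plan is to realize the ``tangent cone'' of $C$ at its triple point as the flat limit of $C$ under the one-parameter subgroup $\rho_{1,1}$.

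First, using the $\SL(2)\times\SL(2)$ action on $\PP^1\times\PP^1$, I would move the multiplicity $3$ singularity $p\in C$ to the coordinate point $([0{:}1],[0{:}1])$. In the affine chart $(x,z)=(X/Y,Z/W)$ the defining equation $f(x,z)=\sum_{0\le i,j\le 3}c_{ij}x^iz^j$ then vanishes to order at least $3$ at the origin, equivalently $c_{ij}=0$ for all $i+j<3$.

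Next, I would exploit the compatibility of this vanishing with the weight grading of $\rho_{1,1}$: the monomial $X^iY^{3-i}Z^jW^{3-j}$ has $\rho_{1,1}$-weight $2i+2j-6$, so the multiplicity $3$ condition is equivalent to every nonzero monomial of $F$ having weight $\ge 0$, while semistability of $[F]$ forces the minimum $\rho_{1,1}$-weight to be $\le 0$. Together these inequalities force at least one nonzero weight-zero monomial to occur, and
\[
\lim_{t\to 0}\rho_{1,1}(t)\cdot[F]\ =\ [c_{30}X^3W^3+c_{21}X^2YZW^2+c_{12}XY^2Z^2W+c_{03}Y^3Z^3],
\]
which is precisely Equation \eqref{E:semistable-11-closed}. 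Setting $u=XW$ and $v=YZ$, this limit is a homogeneous cubic in $u,v$ that factors into three linear forms, exhibiting three $(1,1)$-curves through $p$ whose union matches the lowest-order part of $f$. Since the limit lies in the $G$-orbit closure of $C$, the first assertion follows.

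For the $D_8$ statement, I would invoke the fact that the local tangent cone of $x(y^2-x^6)=0$ is $xy^2=0$, i.e., a line with multiplicity $2$ together with a simple transverse line. Therefore the weight-zero cubic $c_{30}x^3+c_{21}x^2z+c_{12}xz^2+c_{03}z^3$ in the affine chart has a double root and factors as $(ax+bz)^2(cx+dz)$, giving the claimed double conic in the orbit closure.

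The only genuine obstacle is confirming that the multiplicity $3$ condition and semistability together actually force a weight-zero monomial to occur in $F$; this prevents the $\rho_{1,1}$-limit from degenerating to a lower-dimensional stratum. Beyond this, the argument is routine bookkeeping of monomial weights combined with the standard description of tangent cones of $ADE$ singularities.
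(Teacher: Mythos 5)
Your approach is the same as the paper's: put the triple point at the origin, observe that multiplicity $3$ is equivalent to all monomials having non-negative $\rho_{1,1}$-weight, and take the flat limit under $\rho_{1,1}$, which is the weight-zero part $f_3=ax^3+bx^2z+cxz^2+dz^3$. Your argument that semistability with respect to $\rho_{1,1}$ forces a weight-zero monomial (so $f_3\neq 0$) is correct, and this is the ``obstacle'' you flag at the end. But that is not the step that actually needs care, and there is a genuine gap at the point where you assert the limit ``is precisely Equation \eqref{E:semistable-11-closed}.'' That equation, as stated, carries the conditions that $(a,b)$ and $(c,d)$ are each not simultaneously zero, equivalently that $z^2\nmid f_3$ and $x^2\nmid f_3$. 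Your $\rho_{1,1}$ argument only rules out $f_3=0$. If, say, $a=b=0$, the limit is $Y^2Z^2(cXW+dYZ)=0$, which contains double rulings: it is not a D-curve, it is nonsemistable, and in the $D_8$ case the ``double conic'' $(ax+bz)^2$ would degenerate to a pair of double rulings rather than an irreducible conic. Since the whole point of the proposition (and of its use in Proposition \ref{P:D-curves-2}) is that the orbit closure contains a \emph{semistable} curve of the stated form, this degeneration must be excluded.

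The exclusion is exactly what the paper supplies in one line: if $x^2\mid f_3$ or $z^2\mid f_3$, then every monomial of the full equation $f_3+f_{\geq 4}$ has strictly positive weight with respect to $\rho_{2,1}$ (or $\rho_{1,2}$ after swapping the two rulings), i.e.\ $C$ is of the form \eqref{E:nonstable-21} and hence nonsemistable, a contradiction. So the fix is the same kind of weight bookkeeping you are already doing, just with a second one-parameter subgroup; you should add this step and correspondingly note, in the $D_8$ case, that it is what guarantees the double factor $(ax+bz)^2$ gives an irreducible conic. The remaining ingredients of your write-up (the identification $u=XW$, $v=YZ$, the factorization of the binary cubic, and the tangent cone $xy^2=0$ of the $D_8$ singularity) match the paper and are fine.
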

\begin{proof} Let $C$ be a semistable curve with a multiplicity $3$ singularity.
Choose coordinate $X,Y,Z,W$ so that the equation of $C$ in the affine coordinates $x:=X/Y$ and
$z:=Z/W$ is $f_3+f_{\geq 4}=0$, where $f_3$ is a homogeneous polynomial of degree $3$ in $x$ and $z$,
and $f_{\geq 4}\in (x,z)^4$. 
Since $C$ is semistable, we have that $x^2, z^2\nmid f_3$;
%\neq x^2(ax+bz)$ and $f_3\neq z^2(ax+bz)$;
otherwise $C$ would be defined by Equation \eqref{E:nonstable-21}.
Then the flat limit of $C$ under $\rho_{1,1}$ 
is $f_3(x,z)=0$ or, in projective coordinates, 
$$
a X^3W^3+bX^2W^2ZY+cXWZ^2Y^2+dZ^3Y^3=0,
$$
where $(a,b)$ are not simultaneously zero and $(c,d)$ are not simultaneously zero.

If the triple point is a $D_8$ singularity, the tangent cone
is a union of a double line and a transverse line, i.e. $f_3(x,z)=(ax+bz)^2(cx+dz)$. 
\end{proof}

\subsection{Stability analysis}\label{S:analysis}
We summarize the calculations of the previous section and reinterpret them in a geometric language.
First, we describe nonsemistable curves. 

\begin{prop}[Nonsemistable curves]\label{P:unstable}
A curve $C$ is nonsemistable if and only if one of the following holds: 
\begin{enumerate}
\item $C$ contains a double ruling.
\item $C$ contains a ruling and the residual curve $C'$ intersects this ruling in a 
unique point that is also a singular point of $C'$. 
%(The uniqueness of the intersection point ensures
%that the intersection multiplicity is $3$).
\end{enumerate}
\end{prop}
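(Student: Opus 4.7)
The approach is to apply the Hilbert-Mumford numerical criterion together with the enumeration of positive-weight monomials carried out in Section \ref{S:numerical-criterion}, and then to interpret the resulting normal forms geometrically.

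First, I would reduce to the one-parameter subgroups $\rho_{2,1}$ and $\rho_{4,1}$. Every 1-PS of $G=(\SL(2)\times\SL(2))\rtimes\ZZ_2$ is $G$-conjugate to some $\rho_{u,v}$ with $u\geq v\geq 0$, so by Hilbert-Mumford it suffices to determine the curves all of whose monomials have strictly positive weight under some such $\rho_{u,v}$. The cone $u\geq v\geq 0$ is divided by the interior wall $u=3v$ into two open chambers: within $v<u<3v$ the set of positive-weight monomials is independent of $(u,v)$ and coincides with that of $\rho_{2,1}$, while within $u>3v$ it coincides with that of $\rho_{4,1}$. Along the boundary rays $u=v$, $u=3v$, $v=0$, a direct check shows that the positive-weight monomial sets for the corresponding wall 1-PSs such as $\rho_{1,1}$, $\rho_{3,1}$, and $\rho_{1,0}$ are each contained in either the $\rho_{2,1}$- or $\rho_{4,1}$-positive set. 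Hence after a $G$-coordinate change the equation of any nonsemistable curve takes the form \eqref{E:nonstable-21} or \eqref{E:nonstable-41}.

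Next, I would translate each normal form into the stated geometric condition and verify the converse. Equation \eqref{E:nonstable-41} factors as $x^2\cdot f(x,z)=0$, exhibiting $C$ as containing the double ruling $2R$ with $R=\{x=0\}$; conversely, any $C$ containing a double ruling can be brought into this form by a $G$-coordinate change since $G$ acts transitively on rulings of $\PP^1\times\PP^1$. Equation \eqref{E:nonstable-21} factors as $x\cdot h(x,z)=0$, so $C=R+C'$ where $R=\{x=0\}$ is a ruling and $C'$ is a residual $(2,3)$ curve; the absence of constant and linear terms in $h$ shows that $p=(0,0)$ is a singular point of $C'$, while $h(0,z)=cz^3$ shows that $C'\cap R$ is supported at $p$ alone, yielding condition (2). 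For the converse in case (2), I would choose coordinates placing the ruling contained in $C$ at $\{x=0\}$ and the distinguished singular point at $(0,0)$; the hypotheses that $p$ be singular on $C'$ and that $C'\cap R=\{p\}$ set-theoretically force precisely the vanishing of those monomials of the $(2,3)$ equation of $C'$ having non-positive $\rho_{2,1}$-weight, recovering \eqref{E:nonstable-21}.

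The argument is largely mechanical once this dictionary is in place. I expect no serious obstacle beyond the chamber-by-chamber bookkeeping in the first step, which must be done carefully enough to confirm that no wall 1-PS produces a family of destabilized curves not already accounted for by $\rho_{2,1}$ or $\rho_{4,1}$.
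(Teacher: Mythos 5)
Your proposal is correct and follows essentially the same route as the paper: the reduction via the Hilbert--Mumford criterion to the two one-parameter subgroups $\rho_{2,1}$ and $\rho_{4,1}$ using the monomial enumeration of Section \ref{S:numerical-criterion}, followed by the geometric interpretation of the normal forms \eqref{E:nonstable-21} and \eqref{E:nonstable-41}. You merely make explicit the chamber bookkeeping and the converse direction, which the paper leaves implicit.
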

\begin{proof}
Nonsemistable curves are precisely those curves that are defined by Equations  \eqref{E:nonstable-21} 
and \eqref{E:nonstable-41} for some choice of coordinates.
Equation \eqref{E:nonstable-21} 
defines a reducible curve $C=C_1\cup C_2$, where 
$C_1$ is a ruling of $\PP^1\times \PP^1$ 
that intersects the residual $(2,3)$ curve $C_2$ with multiplicity $3$ at the singular point $x=z=0$
of $C_2$. Finally, Equation \eqref{E:nonstable-41} defines a curve with a double ruling.  
\end{proof}
\begin{corollary}\label{C:nonreduced}
The only non-reduced semistable curves are:
\begin{enumerate}
\item Triple conics; these are strictly semistable and have closed orbits.
\item A union of a smooth double conic and a 
conic which is nonsingular along the double conic; these are strictly semistable and have closed orbits.
\end{enumerate}
\end{corollary}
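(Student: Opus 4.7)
The plan is to use Proposition \ref{P:unstable} to narrow down the candidate non-reduced semistable curves to two families, and then verify strict semistability and closedness of orbit for each via Hilbert-Mumford weight computations together with Matsushima's criterion applied to the stabilizer.

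First I would narrow the possibilities. If $D$ is a component of $C$ appearing with multiplicity at least $2$, Proposition \ref{P:unstable}(1) forbids $D$ from being a ruling, so the bidegree of $D$ is $(a,b)$ with $a,b\geq 1$; since $2D\leq (3,3)$, this forces $(a,b)=(1,1)$. Thus $C=3Q$ or $C=2Q_1+Q_2$ with $Q,Q_1,Q_2$ of class $(1,1)$. Moreover, the doubled component must be smooth: a singular $(1,1)$-curve decomposes as a union of two rulings, and doubling it embeds a double ruling in $C$, again contradicting Proposition \ref{P:unstable}(1).

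For the triple conic, after a $G$-translation take $Q=\{XW-YZ=0\}$, so $C=\{(XW-YZ)^3=0\}$. Expanding, $(XW-YZ)^3$ is supported on the four $\rho_{1,1}$-weight-zero monomials $X^3W^3,X^2YZW^2,XY^2Z^2W,Y^3Z^3$ identified in Section \ref{S:numerical-criterion}, so $C$ is $\rho_{1,1}$-fixed and thus strictly semistable. Since $XW-YZ$ is invariant under the diagonal embedding $\SL(2)\hookrightarrow\SL(2)\times\SL(2)$, the stabilizer of $C$ in $G$ contains this copy of $\SL(2)$ and is in particular reductive, so Matsushima's criterion yields that the orbit is closed in the semistable locus.

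For $C=2Q_1+Q_2$, normalize $Q_1=\{XW-YZ=0\}$. Since $Q_1\cdot Q_2=2$ on $\PP^1\times\PP^1$, the intersection $Q_1\cap Q_2$ is either a reduced pair of two distinct points (transverse case) or a single point with multiplicity two (tangent case). In the tangent case with $Q_2$ singular, the node of $Q_2$ lies on $Q_1$, so after fixing coordinates with the node at $([0:1],[0:1])$ and $L_1=\{X=0\},L_2=\{Z=0\}$, the defining equation $(XW-YZ)^2XZ$ expands into monomials all of strictly positive $\rho_{1,1}$-weight, and $C$ is nonsemistable. In the tangent case with $Q_2$ smooth, $C$ is semistable but its flat limit under $\rho_{1,1}$ equals the triple conic, so its orbit is not closed. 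In the transverse case, a suitable representative of the form $(XW-YZ)^2(bXW+cYZ)$ with $(b,c)$ not proportional to $(1,-1)$ is $\rho_{1,1}$-fixed, strictly semistable, and has stabilizer containing the torus $\rho_{1,1}$; hence the stabilizer is reductive and the orbit is closed by Matsushima.

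The principal obstacle is this case analysis for $2Q_1+Q_2$, notably separating the transverse and tangent intersection types and, within the tangent case, distinguishing the nonsemistable sub-case (singular $Q_2$) from the semistable-but-non-closed sub-case (smooth $Q_2$) whose closure absorbs into the triple conic orbit. Once the normal forms are written down, the weight computations become mechanical and Matsushima's criterion closes out the classification uniformly.
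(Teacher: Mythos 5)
Your reduction of the candidate curves is correct and coincides with the paper's argument: Proposition \ref{P:unstable}(1) rules out multiple rulings, the degree count forces the multiple component to be a single smooth $(1,1)$-conic, and Proposition \ref{P:unstable}(2) (equivalently your $\rho_{1,1}$ weight computation for $(XW-YZ)^2XZ$) eliminates a residual conic whose node lies on the double conic. The genuine gap is in your verification that the orbits are closed. Matsushima's criterion states that a \emph{closed} orbit of a reductive group acting on an affine variety has reductive stabilizer (equivalently, the orbit is affine if and only if the stabilizer is reductive); it does \emph{not} say that a reductive stabilizer forces the orbit to be closed. A strictly semistable point can have finite, hence reductive, stabilizer and a non-closed orbit: for $\SL(2)$ acting on binary quartics, $x^2y(x+y)$ has stabilizer of order two yet degenerates to $x^2y^2$ outside its orbit. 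The auxiliary inference ``the stabilizer contains a copy of $\SL(2)$ (resp.\ a torus), hence is reductive'' is also invalid, since a Borel subgroup contains a maximal torus and is not reductive. So both steps of your closedness argument fail as stated. The correct route is the one the weight analysis of Section \ref{S:numerical-criterion} supplies: if the orbit of $(XW-YZ)^3$ or of $(XW-YZ)^2(bXW+cYZ)$ were not closed in $V^{\ss}$, some one-parameter subgroup would carry it to a semistable limit outside the orbit; but every such limit is either again of the form \eqref{E:semistable-11-closed} (hence in the same orbit) or is divisible by $X^2W^2$ or $Y^2Z^2$, hence contains a double ruling and is nonsemistable by Proposition \ref{P:unstable}(1).

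Your finer case analysis of $2Q_1+Q_2$ is a genuine refinement of the paper's one-sentence treatment, and it correctly isolates the edge case where $Q_1$ and $Q_2$ are smooth and tangent: that curve is non-reduced, contains no ruling (so it is semistable by Proposition \ref{P:unstable}), and degenerates under $\rho_{1,1}$ to the triple conic, so its orbit is not closed. However, you leave this case unreconciled with the statement you are proving. Either item (2) is read with transversality built in, as in Section \ref{double-conics} and Remark \ref{R:cross-ratio} where the residual conic meets the double conic in two distinct points --- in which case your classification produces a non-reduced semistable curve not on the list and the word ``only'' needs the qualifier ``with closed orbit'' --- or the tangent curve is included in item (2) and the closed-orbit assertion must be dropped for it. You should state explicitly which reading you adopt; as written, the proposal establishes the classification of candidates but not the full statement.
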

\begin{proof}
The proof is immediate: By Proposition \ref{P:unstable} 
and degree considerations, 
any non-reduced structure has to be supported
along a smooth conic. If the generic multiplicity is $3$, then 
the curve is a triple conic. If it is $2$, then the residual curve cannot 
be a union of two rulings meeting along the double conic by {\em loc. cit.}
\end{proof}
\begin{remark}[Double conics]\label{R:cross-ratio}
For brevity we call the curve described in Part (2) of Corollary \ref{C:nonreduced} a {\em double conic}.
Given a double conic $C=2C_1+C_2$, we consider the horizontal rulings $L_1$ and $L_2$ passing
through two triple points $C_1\cap C_2$. Then the four points of intersection of 
the general vertical ruling with $C_1,C_2,L_1,L_2$ define a cross-ratio 
which is a $\Aut(\PP^1\times\PP^1)$-invariant of $C$. We call it 
the {\em cross-ratio of the double conic $C$}.
\end{remark}

Any other curve that does not fit the description of Proposition \ref{P:unstable} 
is semistable. Recall that there is a unique closed orbit of
semistable curves which is strictly semistable with respect to $\rho_{3,1}$.
This curve is defined by Equation \eqref{E:semistable31}.
%\begin{equation}\label{E:semistable31b}
%X^2YW^3+XY^2Z^3=XY(XW^3+YZ^3)=0,
%\end{equation}
We call such a curve the {\em maximally degenerate
$A_5$-curve.} It consists of two lines in the same ruling $(1,0)$ and a smooth $(1,3)$ curve meeting each 
line at an $A_5$ singularity (i.e. tangent with multiplicity $3$); see Figure \ref{F:A5-A5}.

%\begin{comment}
\begin{figure}[hbt]
\begin{center}
\includegraphics[scale=1]{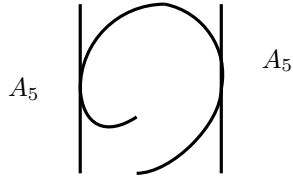}
\end{center}
\vspace{-1pc}
\caption{A maximally degenerate $A_5$-curve}\label{F:A5-A5}
\end{figure}
%\end{comment}

%From Equation \eqref{E:semistable-11-closed}, 
Recall from above that every semistable curve which is strictly
semistable with respect to $\rho_{1,1}$ and has a closed orbit is given by Equation \eqref{E:semistable-11-closed}.
%\begin{equation}\label{D-curve}
%L_1L_2L_3=aX^3W^3+bX^2W^2ZY+cXWZ^2Y^2+dZ^3Y^3=0,
%\end{equation}
%where $(a,b)$ are not simultaneously zero and $(c,d)$ are not simultaneously zero. 
Every such curve is a union of $3$ conics in the class $(1,1)$, all meeting at points $0\times0$ and 
$\infty\times \infty$. In addition, by scaling $X$ and $Z$, we can assume that either
$a=d=1$, or $a=d=0$ and $b=c=1$. 
This leaves us with a $2$-dimensional family of strictly semistable points
$$x^3+bx^2z+cxz^2+z^3=0,$$
or $x^2z+xz^2=xz(x+z)=0.$
We call such curves {\em D-curves}, because the generic D-curve has two ordinary triple point 
($D_4: x^3=y^3$) singularities. 

We can now restate Proposition \ref{P:D-curves-1} in a geometric language:
\begin{prop}\label{P:D-curves-2}
The orbit closure of every semistable curve with a multiplicity $3$ singularity contains a D-curve described by Equation \eqref{E:semistable-11-closed}, 
i.e. either a union of three conics at two $D_4$ singularities, or 
a double conic, or a triple conic.
%, i.e. the only semistable curves with a multiplicity $3$ singularity and a closed orbit
%is a $D$ curve.
\end{prop}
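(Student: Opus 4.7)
My plan is to use Proposition \ref{P:D-curves-1} to reduce to a curve $C_0$ defined by Equation \eqref{E:semistable-11-closed}, and then classify $C_0$ via the multiplicity pattern of a binary cubic. The central observation is that Equation \eqref{E:semistable-11-closed} is bihomogeneous in the pair $(u,v) := (XW, YZ)$: its left-hand side is $g(XW, YZ)$, where $g(u,v) := au^3 + bu^2v + cuv^2 + dv^3$ is a binary cubic form. Moreover, the hypotheses ``$(a,b)$ not both zero'' and ``$(c,d)$ not both zero'' translate precisely to the statement that neither $[1:0]$ nor $[0:1]$ is a root of $g$ of multiplicity $\geq 2$.

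Factoring $g(u,v) = \prod_{i=1}^{3}(\alpha_i u - \beta_i v)$ over $\CC$, I would write $C_0$ as a union of three bidegree-$(1,1)$ divisors $D_i = \{\alpha_i XW = \beta_i YZ\}$. Each $D_i$ is a smooth $(1,1)$-conic through $0\times 0$ and $\infty\times\infty$ when $\alpha_i\beta_i \neq 0$; otherwise it degenerates to one of the pairs of rulings $\{Y=0\}\cup\{Z=0\}$ (when $\alpha_i = 0$) or $\{X=0\}\cup\{W=0\}$ (when $\beta_i = 0$). A short local computation in affine coordinates $(x,z) = (X/Y, Z/W)$ shows that $D_i$ always contributes a single smooth branch at $0\times 0$ with tangent direction $[\beta_i : \alpha_i]$, and similarly at $\infty\times\infty$, regardless of whether $D_i$ is itself smooth or degenerate.

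The proof would then conclude by enumerating the root-multiplicity patterns of $g$ permitted by the hypothesis. In the three-distinct-roots case, the three smooth branches through $0\times 0$ (respectively $\infty\times\infty$) have pairwise distinct tangent directions, so $C_0$ acquires $D_4$ singularities at both points and is a union of three $(1,1)$-conics at two $D_4$ singularities. In the double-plus-simple root case, the double root is forced by the hypothesis to lie at a finite nonzero point of $\PP^1_{[u:v]}$, hence to yield a smooth double conic $2C_1$; combined with the residual $(1,1)$-divisor this produces a double conic in the sense of Corollary \ref{C:nonreduced}(2). A triple root is likewise forced to give a smooth triple conic. The only delicate point is confirming the $D_4$ classification in the three-distinct-roots case when some $D_i$ is a degenerate pair of rulings, but this follows directly from the tangent-direction calculation above.
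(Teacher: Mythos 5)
Your proposal is correct and follows essentially the same route as the paper: Proposition \ref{P:D-curves-1} reduces to a curve of the form \eqref{E:semistable-11-closed}, whose left-hand side the paper already writes as $L_1L_2L_3=g(XW,YZ)$ for a binary cubic $g$, and the trichotomy (three distinct roots / double root / triple root) is exactly the paper's implicit case division into D-curves with two $D_4$'s, double conics, and triple conics. Your explicit check that the hypotheses force any repeated root away from $[1{:}0]$ and $[0{:}1]$, and that a degenerate $(1,1)$ factor still contributes a single smooth branch at each of $0\times 0$ and $\infty\times\infty$, is a worthwhile verification of a detail the paper leaves tacit.
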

\begin{remark}
We also note that every non-reduced semistable curve is a D-curve: {\em triple conics} 
arise from Equation \ref{E:semistable-11-closed} by taking $L_1=L_2=L_3$ and {\em double conics} 
arise by taking $L_1=L_2$.
\end{remark}
\subsection{Geometry of semistable curves}
We refine the GIT analysis to obtain a list of geometrically meaningful strata inside the semistable locus.
We begin with strictly semistable points in the highest stratum:
\subsubsection{D-curves\em:}
\label{two-D4} A D-curve is a curve defined by Equation 
\eqref{E:semistable-11-closed}. These are precisely strictly semistable
curves (with closed orbits) which consist of three conics in class $(1,1)$ 
passing through two points of $\PP^1\times \PP^1$ not 
on the same ruling, i.e. three conics meeting in two $D_4$ (ordinary 
triple points) singularities (see Figure \ref{F:D-curves}).
By Proposition \ref{P:stable-limits} ($\mathrm{D}_{\text{even}}$) and Proposition \ref{P:versality}, 
the variety of stable limits of the general D-curve is the locus of
{\em elliptic triboroughs} in $\M_4$, 
i.e. nodal unions of two elliptic components
along three points.

%\begin{comment}
\begin{figure}[hbt]
\begin{center}
\includegraphics[scale=0.75]{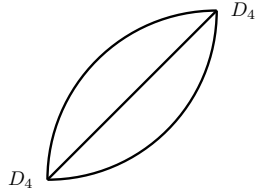}
\end{center}
\vspace{-0.5pc}
\caption{A D-curve}\label{F:D-curves}
\end{figure}
%\end{comment}
%The degenerations of these include 
%double conic+smooth conic, triple conics, three conics meeting in 
%$J_{10}$ singularity.
%\vspace{-1pc}

\subsubsection{Curves with separating $A_5$ singularities\em:} 
Such curves are necessarily of the form
$C=C_1\cup C_2$ where $C_1$ is a ruling in class $(1,0)$  
and $C_2$ is a curve in class $(2,3)$ that intersects $C_1$ with multiplicity $3$ at a smooth point of $C_2$. 
It is easy to see that all such curves contain the maximally degenerate
$A_5$-curve in their orbit closure. 
From Proposition \ref{P:stable-limits} $\mathrm{(A_{\text{odd}})}$ and Proposition \ref{P:versality},
we conclude that
the variety of stable limits of the maximally degenerate
$A_5$-curve is all of $\Delta_2\subset \M_4$.

\subsubsection{Double conics\em:}\label{double-conics} 
These are defined by the equation $L_1^2L_2=0$, where $L_1$ is an irreducible form of bidegree $(1,1)$ and $L_2$ is a form of bidegree $(1,1)$ that meets $L_1$ in two distinct points. 
Double conics form a closed locus inside the locus of D-curves. % \ref{two-D4}.
\subsubsection{Curves with $D_8$ singularities\em:} \label{S:D-8}
Consider a curve $C=C_1\cup C_2$, where $C_2$ is a nodal curve in class $(2,2)$ and $C_1$ is a smooth curve in class $(1,1)$ that intersects one of the branches of the node of $C_1$ with multiplicity $3$.
These curves do not have closed orbits: they isotrivially specialize to 
double conics by Proposition \ref{P:D-curves-1}. The reason we single out this class of curves is that 
it follows immediately from Proposition \ref{P:stable-limits} $(\mathrm{D_{\text{even}}})$ that the variety of 
stable limits of a $D_8$-curve is the closure of the locus of irreducible nodal curves with a hyperelliptic normalization. 
Denote this locus by $\Delta_0^{hyp}$. Then $\Delta_0^{hyp}$ is divisorial inside the Petri divisor $P$: it is the locus of canonical genus $4$ 
curves lying on a singular quadric and passing through its vertex. 
Note that $\Delta_0\cap P$ has two irreducible components, with $\Delta_0^{hyp}$ being one of them.

Since $D_8$-curves specialize isotrivially to double conics, $\Delta_0^{hyp}$ also lies in the variety of stable
limits of double conics. Observing that double conics have moduli (see Remark \ref{R:cross-ratio}), we conclude
that the rational map from $\M_4$ to the GIT quotient $V^\ss\gitq G$ is undefined along $\Delta_0^{hyp}\subset P$.

\subsubsection{Triple conics\em:}\label{triple-conics} These form
a single (closed) orbit of curves 
defined by the equation $L^3=0$, where $L$ is an irreducible form of bidegree $(1,1)$. 
The corresponding point in $V^{\ss}\gitq G$ lies in
the closure of double conics. % from \ref{double-conics}.
Among semistable curves whose orbit closure contains the orbit of the triple conic are curves
with a $J_{10}$ singularity $y^3=x^6$ defined 
by the equation $L_1L_2L_3=0$, where $L_i$ are forms of bidegree $(1,1)$ such 
that the corresponding conics all meet in a single point:
$$
(x+z+c_0xz)(x+z+c_1xz)(x+z+c_2xz)=0.
$$
Evidently, all such curves isotrivially specialize to the triple conic.
% in \ref{triple-conics}.
%We will see later that the variety of stable limits of the triple conic
%is the Petri divisor.
\subsubsection{Curves with $E_6, E_7, E_8$ singularities\em:}
None of these have closed orbits:
These arise as deformations of curves with $J_{10}$ 
singularities (see \cite[Section I.1]{arnold-inventiones} or \cite{J10})
and in fact isotrivially specialize to the triple conic by Proposition 
\ref{P:D-curves-1}. (Note, however, that the variety of stable limits of $E_6$ is 
the locus of $[C_1\cup C_2] \in \Delta_1$ such that $C_1\cap C_2$ is a hyperflex
of the genus $3$ curve $C_2$; see \cite[Theorem 6.2]{Hassett-stable}.) 

\medskip

We proceed to describe geometrically meaningful strata inside the stable locus:
\subsubsection{Curves with $A_8$ and $A_9$ singularities\em:}
\label{S:A-curves}
Consider an $A_8$-curve $C$ (see \cite{FedJen} for a general background on canonical $A$-curves)
defined  %by the crimping datum 
%$\lambda(t)=t^2+t^3$ and given  
parametrically by
\[
t\mapsto [1-3t+3t^3, (1-3t+3t^3)(t^2+t^3), t^2(1-2t), t^4(1-t-2t^2)].
\]
%where $\kappa(t)=1-3t-3t^3$ is the gluing datum for the canonical line bundle. 
This curve is a complete
intersection of the smooth quadric $z_0z_3=z_1z_2$ and
a cubic  in $\PP^3$. The only singularity of $C$ is of type $A_8$ and $C$ has a rational normalization.
Locally around $[C]$, the locus of curves with $A_8$ singularities is
the fiber of a smooth map $\Hilb(\PP^1\times\PP^1)\ra \Def(A_8)$ 
(see Proposition \ref{P:versality}). 
%Since the versal deformation space of $A_8$ singularity 
%is $9$-dimensional and 
%$\Hilb(\PP^1\times\PP^1)$ is $15$-dimensional, 
Denote by $A$ the closure of the locus of $A_8$-curves 
in $V^\ss\gitq G$.
Counting dimensions, we conclude 
that $\dim(A)=1$. 
Since $J_{10}$ singularity deforms to $A_8$ by \cite{J10}, we see
that $A$ passes through the triple conic.

By Proposition \ref{P:stable-limits} $\mathrm{(A_{\text{even}})}$, 
the variety of stable limits of an $A_8$-curve 
is the hyperelliptic locus $\overline{H}_4\subset\M_4$.
We will see in Theorem \ref{T:flip} that 
$f\co \M_4 \dra V^\ss\gitq G$ flips $\overline{H}_4$ to $A$.

%It's ideal is $(y z-x w,3 x y^{2}-8 y^{3}+3 x z^{2}-z^{3}-6
%     x^{2} w+33 x y w-36 y^{2} w-24 x z w+63 x w^{2}-54 y w^{2}+27 z %w^{2}-27
%     w^{3})$

There is another distinguished point in $A$, which corresponds to 
a union of $(2,1)$ and $(1,2)$
curve at an $A_9$ singularity. Up to projectivities, 
there is a unique $A_9$-curve. It is defined parametrically
%by the crimping datum $\lambda(t)=t+t^2+t^3+t^4$ and parametrically 
by 
%\[
%(s,t)\mapsto \left(\begin{matrix} 1 & s & s^2 &s^3 \\
%1-3t+5t^2-5t^3 & t(1-2t+2t^2) & t^2-t^3 & t^3 \end{matrix}\right)
%\]
\[
\left(\begin{matrix} 1 & s & s^2 &s^3 \\
1-3t+3t^2+t^3 & t(1-2t+t^2) & t^2-t^3 & t^3 \end{matrix}\right).
\]
%t+t^2
%(t+t^2)^2=t^2+2t^3+t^4 
%(t+t^2)^3=t^3+3t^4
%\kappa(t)=1-3t+5t^2-5t^3
The variety of stable limits of the $A_9$-curve is $\overline{H}_4\subset\M_4$ 
by Proposition \ref{P:stable-limits} $\mathrm{(A_{\text{odd}})}$.
\subsubsection{Curves with $A_6$ or $A_7$ singularities\em:}
\label{S:A67-curves}
Curves with non-separating $A_7$ singularities ($y^2=x^8$) 
replace curves in $\Delta_0\cap \overline{H}_4$, 
i.e. curves whose normalization is 
hyperelliptic and such that points lying over the node are conjugate.
 
Curves with separating $A_7$ singularities (smooth 
$(1,1)$ and $(2,2)$ curves meeting with multiplicity $4$ at a single 
point) replace curves in $\Delta_1$ with a hyperelliptic 
genus $3$ component.

Curves with $A_6$ singularities 
replace curves in $\Delta_1$ with a hyperelliptic 
genus $3$ component attached at a Weierstrass point.

\subsubsection{} %Remaining loci\em:} 
Finally, using Proposition \ref{P:stable-limits} we see that:
Curves with non-separating $A_5$-singularities replace hyperelliptic admissible
covers with two irreducible components of genus $1$ and $2$. Curves with $A_4$ singularities 
replace curves with Weierstrass genus $2$ tails, 
i.e., curves that are a nodal union of two genus $2$ curves in a point which is a Weierstrass point on one of the components. 
Curves with $A_3$ singularities replace curves with elliptic bridges.
Curves with $A_2$ singularities replace curves with elliptic tails.
\subsubsection{} 
We summarize this section by collecting the observations regarding which singular curves in $M$ replace which 
geometrically meaning loci in $\M_4$ under the rational map $f\co \M_4 \dra M$. The list of singular curves discussed in this section,
together with their varieties of stable 
limits is given in Table \ref{table-replacement}. We note 
that this list is not exhaustive and, for example, does not include all possible boundary strata flipped by $f$.
{\small
\begin{table}[htb]
\centering
\renewcommand{\arraystretch}{1.3}
\begin{tabular}{| c | c |}
\hline
Singularity type introduced   & Locus removed    \\
\hline
\hline
$A_2$                & {elliptic tails attached nodally}\\
\hline
 $A_3$                & {elliptic bridges attached nodally } \\
\hline
$A_4$                & {genus $2$ tails attached nodally at a Weierstrass point}  \\
\hline
non-separating $A_5$                & {genus $2$ bridges attached nodally at conjugate points}         \\
\hline
$\text{the maximally degenerate $A_5$-curve}$  & $\Delta_2$\\
\hline
$A_6$                & {hyperelliptic genus $3$ tails attached nodally at a Weierstrass point}         \\
\hline
$A_7$                & {hyperelliptic genus $3$ tails attached nodally}         \\
\hline
$D_4$    &    {elliptic triboroughs}   \\
\hline
{double conics}    & {curves in $\Delta_0$ with a hyperelliptic normalization}\\
\hline
$A_{8}, A_{9}$ & {hyperelliptic curves} \\
\hline
{the triple conic} & {the Petri divisor $P$} \\
\hline
\end{tabular}
\medskip
\caption{Replacing varieties of stable limits by singular curves}% via $f\co \M_4\ra M$}
\label{table-replacement}
\end{table}
}

\section{Birational geometry of $\M_4$}
%We keep the notation of Section \ref{S:notation}.
 As before, $V=\vert\O_{\PP^1\times\PP^1}(3,3)\vert$,  
 $G=\SL(2)\times \SL(2)\rtimes \ZZ_2$, and $M=V^\ss\gitq G$. The GIT quotient 
 $\phi\co V^\ss\ra M$
 has been described in detail in the previous section, where we have seen 
 that the natural map $f\co \M_4\dra M$ is birational.
 We now describe how $M$ fits into the Hassett-Keel program for $\M_4$.
\begin{prop}\label{P:contraction}
The rational map $f\co \M_{4} \dra M$ is a contraction, i.e.
$f^{-1}$ does not contract divisors.
\end{prop}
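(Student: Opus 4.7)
The plan is to prove that the inverse rational map $f^{-1}\colon M \dra \M_4$ has indeterminacy locus of codimension at least two in $M$. This will suffice: if $D\subset M$ is any prime divisor, then $D$ meets the domain of definition $U\subset M$ of $f^{-1}$ in a dense open subset of $D$, so the image $f^{-1}(D\cap U)\subset \M_4$ is $(\dim D)$-dimensional and its closure is a prime divisor mapping birationally onto $D$; hence no divisor of $M$ is contracted by $f^{-1}$.

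To pin down where $f^{-1}$ is defined, I will invoke Proposition \ref{P:versality}: every local deformation of a semistable $(3,3)$-curve $C$ is realized by an embedded deformation in $\Hilb(\PP^1\times\PP^1)$. It follows that the variety of stable limits $\Tl_C\subset \M_4$ agrees with the set of stable limits arising from one-parameter families of $(3,3)$-curves through $C$, and hence $f^{-1}$ is defined at the closed orbit $[C]\in M$ precisely when $\Tl_C$ is a single point. Writing $B=\{[C]\in M : \dim \Tl_C\geq 1\}$ for the indeterminacy locus of $f^{-1}$, the task reduces to showing $\dim B\leq \dim M-2=7$.

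Here the main work is a case-by-case enumeration combining Proposition \ref{P:stable-limits} with the stability analysis of Section \ref{S:analysis}. The strata of $M$ which contribute to $B$ will turn out to be: the triple conic and the maximally degenerate $A_5$-curve (each a single point of $M$); the $1$-parameter family of double conics of Remark \ref{R:cross-ratio}; the $2$-parameter family of D-curves arising from \eqref{E:semistable-11-closed}; and the $1$-dimensional locus $A$ of $A_8$- and $A_9$-curves of Section \ref{S:A-curves}. All other singular semistable curves---those with $D_8$, $E_6$, $E_7$, $E_8$, or $J_{10}$ singularities---isotrivially specialize into this list by Proposition \ref{P:D-curves-2} and the discussions of Sections \ref{S:D-8} and \ref{triple-conics}, so they contribute no new components to $B$. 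Every remaining closed orbit in $M$ represents a reduced curve with at worst ordinary nodes or isolated $A_k$ ($k\leq 7$) or $D_4$ singularities, for which Proposition \ref{P:stable-limits} yields a zero-dimensional variety of stable limits.

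The upshot is that each stratum in the above list has dimension at most $2$, so $\dim B\leq 2$ and $B$ has codimension at least $7$ in $M$, completing the proof. The main obstacle in this argument is the last dimension bookkeeping step: one must verify that the D-curve locus really has dimension $2$ in $M$ (which follows from the explicit normal form after Proposition \ref{P:D-curves-1}, giving a $2$-parameter family modulo the stabilizer of the two triple points) and, more importantly, confirm that no codimension\nb-$1$ stratum of $M$ has been overlooked in the enumeration of $B$; this in turn amounts to the observation that any closed orbit outside the loci listed above is represented by a curve to which Proposition \ref{P:stable-limits} applies with a single-point output.
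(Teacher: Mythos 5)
Your reduction to bounding the indeterminacy locus of $f^{-1}$, and your criterion that $f^{-1}$ is defined at $[X]\in M$ exactly when the variety of stable limits $\Tl_X$ is a single point, are both reasonable (the latter uses normality of $M$). The proof breaks down at the enumeration step. You assert that every closed orbit outside your short list (triple conic, maximally degenerate $A_5$-curve, double conics, D-curves, the locus $A$) is a curve with at worst nodes, $A_k$ ($k\le 7$), or $D_4$ singularities, ``for which Proposition \ref{P:stable-limits} yields a zero-dimensional variety of stable limits.'' That is false for every $A_k$ with $k\ge 2$: by Proposition \ref{P:stable-limits} the tails of stable limits of an $A_{2k}$ (resp.\ $A_{2k+1}$) singularity sweep out a positive-dimensional locus in $\M_{k,1}$ (resp.\ $\M_{k,2}$). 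For instance, the tail of a cusp $A_2$ is an arbitrary elliptic curve, so $\Tl_X$ is at least one-dimensional for \emph{any} cuspidal $X$ --- this is exactly why Theorem \ref{T:determinacy}(2) shows that $f$ contracts the elliptic-tail direction onto a single cuspidal curve, i.e.\ that $f^{-1}$ is \emph{not} defined there; the same phenomenon occurs for $A_3,\dots,A_7$ and for $D_4$ by part $(\mathrm{D}_{\text{even}})$, as Table \ref{table-replacement} records. Consequently the locus $B$ where $\Tl$ is positive-dimensional contains the image of the entire worse-than-nodal stratum of the discriminant; it has dimension $\dim M-2=7$, not $\le 2$. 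Your final inequality $\dim B\le 2$ is therefore wrong, even though the statement you actually need, $\text{codim}(B,M)\ge 2$, happens to be true.

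The paper gets that codimension bound by a much shorter route: nodal $(3,3)$-curves are already stable, so the stabilization morphism makes $f^{-1}$ regular off the image of the worse-than-nodal locus, and that locus has codimension $\ge 2$ in $V^{\ss}$ because the discriminant in $V$ is an irreducible divisor whose geometric generic point is a nodal curve. To repair your argument, replace the case-by-case enumeration by this observation. One further caution about your opening sentence: a birational map that is regular in codimension one can still contract a divisor (a blow-down is regular everywhere), so the claim that $f^{-1}(D\cap U)$ is automatically $(\dim D)$-dimensional requires the additional remark that on the nodal locus $f^{-1}$ is the stabilization map, which is quasi-finite (a nodal $(3,3)$-curve is recovered from its abstract isomorphism class via the canonical embedding and the unique quadric containing it).
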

\begin{proof}
It suffices to show that $f^{-1}$ is defined away from codimension $2$ locus. Consider 
the $G$-invariant closed subscheme  
$$
B:=\{[C]\in V \mid \text{$C$ has worse than nodal singularities}\}.
$$
Clearly, curves in $V^{\ss} \setminus B$ are moduli semistable. The existence of the stabilization morphism
implies that the morphism $V^{\ss} \setminus B \ra \M_4$ is well-defined. Since this morphism 
is $G$-invariant and $\phi\co V^{\ss}\ra M$ 
is a GIT quotient,
we see that $f^{-1}$ is a regular morphism on 
$M\setminus \phi(B)$.
As the discriminant divisor in $V$ is
irreducible with the geometric generic point a nodal curve, 
we deduce that every irreducible component of 
$B$ has codimension at least $2$ inside $V^\ss$. Thus 
$\text{codim}(\phi(B), M)\geq 2$.
%It follows that $f^{-1}\co V^\ss\gitq G \dra \M_4$ does not contract divisors.
% by Proposition \ref{P:versality} 
%and the fact that the only planar singularity with the codimension of %equisingular locus equal to $1$ is 
%the node (see \cite{harris-diaz}).
\end{proof}

\begin{prop}\label{P:delta-lambda-V}
On $V\simeq \PP^{15}$, we have 
$\delta=\O_V(34)$ and $\lambda=\O_V(4)$.
\end{prop}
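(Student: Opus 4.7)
The plan is to compute $\delta$ and $\lambda$ by independent methods. Since $\Pic(V) = \ZZ\cdot H$ with $H = \O_V(1)$, it suffices to determine each class as an integer multiple of $H$.

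For $\delta$, I would use a test pencil argument. Let $B \simeq \PP^1 \subset V$ be a general pencil. Since two $(3,3)$ curves on $\PP^1\times\PP^1$ meet in $2\cdot 3\cdot 3 = 18$ points, the total space $\mathcal{C}_B$ of the pencil is the blowup $\Bl_{18}(\PP^1\times\PP^1)$, and therefore
\[
\chi_{\mathrm{top}}(\mathcal{C}_B) = \chi(\PP^1\times\PP^1) + 18 = 22.
\]
By the irreducibility of the discriminant hypersurface in $V$ and Bertini, a general pencil $B$ meets the discriminant transversely at its smooth points, so every singular fiber of $\mathcal{C}_B\to B$ is a $1$-nodal curve. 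Since a smooth genus $4$ fiber has $\chi_{\mathrm{top}} = -6$ and each node contributes $+1$ to the Euler characteristic, the count $22 = 2(-6) + n$ gives $n = 34$ nodes in total. Thus $\delta\cdot B = 34$, so $\delta = \O_V(34)$.

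For $\lambda$, I would compute the Hodge bundle $\pi_*\omega_\pi$ directly. The universal curve $\mathcal{C}\subset V\times\PP^1\times\PP^1$ is cut out by the natural section of $\O_V(1)\boxtimes \O_{\PP^1\times\PP^1}(3,3)$, and adjunction gives
\[
\omega_\pi = (\pi^*\O_V(1)\otimes p_2^*\O_{\PP^1\times\PP^1}(1,1))|_{\mathcal{C}}.
\]
By the projection formula, $\pi_*\omega_\pi = \O_V(1)\otimes \pi_*(p_2^*\O(1,1)|_{\mathcal{C}})$. I would then push forward the twisted restriction sequence
\[
0\to \pi^*\O_V(-1)\otimes p_2^*\O(-2,-2)\to p_2^*\O(1,1)\to p_2^*\O(1,1)|_{\mathcal{C}}\to 0
\]
on $V\times\PP^1\times\PP^1$. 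Using the vanishings $H^i(\PP^1\times\PP^1,\O(-2,-2)) = 0$ for $i=0,1$, together with $H^0(\O(1,1)) = \CC^4$ and $H^1(\O(1,1)) = 0$, the long exact sequence collapses to yield $\pi_*(p_2^*\O(1,1)|_\mathcal{C}) = \O_V^{\oplus 4}$. Hence $\pi_*\omega_\pi \cong \O_V(1)^{\oplus 4}$, and taking first Chern classes gives $\lambda = 4H$.

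I do not anticipate serious obstacles: both computations are routine. The only subtle point is the Bertini transversality used for $\delta$, which follows from the standard fact that the discriminant of $|\O(3,3)|$ on $\PP^1\times\PP^1$ is irreducible with generic point a $1$-nodal curve. The $\lambda$ computation is purely algebraic and sidesteps any need to extend the Hodge bundle across the discriminant locus.
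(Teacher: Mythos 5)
Your computation of $\lambda$ is essentially identical to the paper's: adjunction gives $\omega_{\C/V}=\O_{\C}(1,1,1)$, and pushing forward the twisted restriction sequence identifies $\pi_*\omega_\pi$ with $\O_V(1)^{\oplus 4}$. For $\delta$, however, you take a genuinely different route. The paper stays within the Grothendieck--Riemann--Roch framework: it computes $\kappa=(\mathrm{pr}_3)_*(\omega_{\C/V}^2)=14H$ by intersecting $(3H_1+3H_2+H_3)(H_1+H_2+H_3)^2$ and then extracts $\delta=12\lambda-\kappa=34H$ from Mumford's relation. You instead compute the degree of the discriminant directly by a test pencil: $\chi_{\mathrm{top}}(\Bl_{18}(\PP^1\times\PP^1))=22$, each $1$-nodal fiber contributes $+1$ relative to the smooth fiber's $\chi=-6$, giving $n=34$. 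Both arguments are correct. Your approach is more elementary and geometric, and has the virtue of making the identification of $\delta$ with the honest discriminant divisor transparent; the paper's approach avoids the (mild) transversality input you need --- that a general pencil meets the discriminant transversely in points corresponding to $1$-nodal curves --- and packages everything into one Chern-class calculation, which also produces $\kappa$ as a byproduct. Your stated justification for transversality (irreducibility of the discriminant with $1$-nodal generic point, plus Bertini) is the standard one and is adequate here.
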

\begin{proof}
Let $\C \hookrightarrow \PP^1\times \PP^1\times V$, together with the 
projection $\mathrm{pr}_3\co \C\ra V$, be the universal $(3,3)$ curve. 
Set $H_i=\mathrm{pr}_{i}^*\O(1)$.
Since $\C$ is a smooth $(3,3,1)$-divisor on $\PP^1\times \PP^1\times V$, 
we obtain by adjunction 
that $\omega_{\C/V}=\O_{\C}(1,1,1)$. %$(-2,-2, 0)
By pushing forward via $\mathrm{pr}_{3}$ the exact sequence 
\[
0\ra \O(-2,-2,0) \ra \O(1,1,1)\ra \O_{\C}(1,1,1) \ra 0,
\]
we deduce that 
\[
\lambda=c_1((\mathrm{pr}_{3})_*\O_{\C}(1,1,1))=c_1((\mathrm{pr}_{3})_*\O(1,1,1))=4H_3.
\]
We also compute that 
\[
\kappa=(\mathrm{pr}_{3})_*(\omega_{\C/V}^2)
=(\mathrm{pr}_{3})_*\left((3H_1+3H_2+H_3)(H_1+H_2+H_3)^2\right) %extract coefficient of H_3
%=H_3(2H_1H_2)+2H_3(H_1+H_2)(3H_1+3H_2)=14H_3$. 
=14H_3.\]
Using Mumford's formula $\lambda=(\kappa+\delta)/12$,
we conclude that $\delta=34H_3$. The claim follows.
\end{proof}
\begin{corollary}\label{C:lambda-delta}
On $M=V^\ss\gitq G$, 
we have $\delta=\O_M(34)$ and $\lambda=\O_M(4)$, 
where $\O_M(1)$ is the GIT polarization coming from $\O_V(1)$.
\end{corollary}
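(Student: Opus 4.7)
The plan is to pull back the computation of Proposition~\ref{P:delta-lambda-V} from $V$ to $V^{ss}$ and then descend along the good quotient $\phi\co V^{ss}\to M$. By construction of the GIT quotient with respect to the linearization $\O_V(1)$, one has $\phi^*\O_M(1) = \O_V(1)|_{V^{ss}}$ rationally; passing to a power to kill stabilizer characters at strictly semistable closed orbits changes nothing modulo torsion.

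The classes $\lambda_M$ and $\delta_M$ on $M$ are defined via pullback through the rational inverse $f^{-1}\co M\dashrightarrow \M_4$. Proposition~\ref{P:contraction} ensures that $f^{-1}$ is a morphism away from a codimension-$2$ subset of $M$, so the pullbacks are well-defined Weil divisor classes on $M$, and they are $\QQ$-Cartier because $M$, being a GIT quotient of a projective space by a reductive group, is $\QQ$-factorial. On the open locus of moduli stable curves, the composition $V^{ss}\to M \dashrightarrow \M_4$ is the natural moduli map, so $\phi^*\lambda_M$ and $\phi^*\delta_M$ agree with the Hodge and discriminant classes of the universal $(3,3)$-family $\C\to V$ restricted to $V^{ss}$. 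By Proposition~\ref{P:delta-lambda-V}, these restrictions equal $4\O_V(1)|_{V^{ss}}$ and $34\O_V(1)|_{V^{ss}}$, and the injectivity of $\phi^*$ on $\Pic$ (which follows from $\phi_*\O_{V^{ss}} = \O_M$ for a good quotient) yields $\lambda_M = 4\O_M(1)$ and $\delta_M = 34\O_M(1)$.

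The only delicate point is the descent of $G$-equivariant line bundles: stabilizers at strictly semistable points with closed orbits (D-curves, the maximally degenerate $A_5$-curve, triple conics) could in principle act nontrivially on the fibers, obstructing $\O_V(1)$, $\lambda$, and $\delta$ from descending as honest line bundles rather than as $\QQ$-line bundles. This is immaterial for the statement of the corollary, since we only need the $\QQ$-Cartier classes, for which the coefficients $4$ and $34$ are read off unambiguously from the computation on the parameter space $V$; equivalently, all three bundles descend after passing to a common power of the linearization.
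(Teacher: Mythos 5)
Your argument is correct and is essentially the paper's own proof: the paper simply notes that $\lambda$ and $\delta$ are $G$-invariant classes on $V$, so the identities $\lambda=4H$, $\delta=34H$ of Proposition~\ref{P:delta-lambda-V} descend along $\phi\co V^{\ss}\to M$; you have just spelled out the standard descent mechanism ($\phi_*\O_{V^{\ss}}=\O_M$, injectivity of $\phi^*$, passage to a power of the linearization at strictly semistable orbits). The only superfluous step is the appeal to $\QQ$-factoriality of $M$, which is neither a standard fact for GIT quotients nor needed here, since the conclusion itself exhibits $\lambda$ and $\delta$ as multiples of the Cartier class $\O_M(1)$.
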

\begin{proof}
Since $\lambda$ and $\delta$ are $G$-invariant divisor classes, 
the results of Proposition \ref{P:delta-lambda-V}
descend to $M$.
\end{proof}

\subsection{Proof of Main Theorem}
\label{S:proof-main-theorem}

We have seen that $f\co \M_4\dra M$ is a birational contraction in Proposition \ref{P:contraction}. Theorem \ref{T:determinacy}
below shows that $f$ contracts the Petri divisor $P$, and the boundary divisors $\Delta_1$, $\Delta_2$. Thanks to the GIT analysis in Section \ref{S:GIT},
$f$ is well-defined at all points of $\M_4\setminus (\Delta \cup P)$ and is well-defined at the generic point of $\Delta_0$. 
It follows that $f$ is defined at the generic point of every irreducible divisor in $\M_4$ with the exception of $P$, $\Delta_1$, and $\Delta_2$.

\begin{lemma}[Petri divisor]\label{L:Petri-divisor}
 The divisor class of the Petri divisor 
is %(up to a scalar)
$$
P=17\lambda-2\delta_0-7\delta_1-9\delta_2.
$$
\end{lemma}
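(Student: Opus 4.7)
My plan is to realize $P$ as the degeneracy divisor of the unique canonical quadric in genus $4$ and compute its class in two steps: a Chern class calculation on a large open subset of $\Mg{4}$, followed by a boundary correction.

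Let $\pi\co \C\to \Mg{4}$ be the universal curve, $\mathbb{E}=\pi_*\omega_\pi$ the Hodge bundle, and consider the multiplication map $\mu\co \Sym^2\mathbb{E}\to \pi_*(\omega_\pi^{\otimes 2})$. On the open $U\subset \Mg{4}$ where $\mu$ is surjective, the kernel $\mathcal{K}:=\ker(\mu)$ is a line bundle whose fiber at $[C]$ is the line spanned by the quadric containing the canonical image of $C$. The inclusion $\mathcal{K}\hookrightarrow\Sym^2\mathbb{E}$ is the universal symmetric bilinear form, and its discriminant, i.e., the determinant of the associated map $\mathbb{E}^\vee\to\mathbb{E}\otimes\mathcal{K}^\vee$, is a section of $(\det\mathbb{E})^{\otimes 2}\otimes\mathcal{K}^{-4}$ whose vanishing on $U$ is exactly $P\cap U$, where the canonical quadric drops rank to $3$ or less.

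The Chern classes on $U$ are straightforward: Mumford's formula $c_1(\pi_*(\omega_\pi^{\otimes n}))=\lambda+\binom{n}{2}\kappa_1$ combined with $\kappa_1=12\lambda-\delta$ gives $c_1(\pi_*(\omega_\pi^{\otimes 2}))=13\lambda-\delta$, and $c_1(\Sym^2\mathbb{E})=5\lambda$. The exact sequence $0\to\mathcal{K}\to \Sym^2\mathbb{E}\to\pi_*(\omega_\pi^{\otimes 2})\to 0$ on $U$ then yields $c_1(\mathcal{K})|_U=-8\lambda+\delta$, so the discriminant section has class $2\lambda-4\,c_1(\mathcal{K})$ on $U$.

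The main obstacle is the boundary analysis. The open $U$ excludes the hyperelliptic locus (codimension $2$ and hence invisible at the divisor-class level), but it also excludes divisorial components of $\Delta_1$ and $\Delta_2$ where $\mu$ fails to be surjective: at a generic point of $\Delta_1$ the canonical map contracts the elliptic tail so the image is a plane quartic, while at a generic point of $\Delta_2$ the image is a pair of skew lines, and in both cases quadrics through the image form a $4$-parameter family rather than a pencil. As a result, $\mathcal{K}$ jumps rank along these loci and its line-bundle extension $\bar{\mathcal{K}}$ to $\Mg{4}$ differs from the $U$-formula by integer multiples of $\delta_1$ and $\delta_2$; moreover, the discriminant section itself acquires identical vanishing along these components with prescribed multiplicities that must be subtracted to recover the closed Petri divisor. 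To pin down these corrections I would run a formal analysis of a transverse one-parameter smoothing of a general curve in each of $\Delta_1, \Delta_2$, tracking the universal quadric and $\det Q$ to sufficient order in the smoothing parameter, or equivalently intersect the candidate class with a spanning set of test curves: a Lefschetz-type pencil of smooth genus $4$ curves to isolate the coefficients of $\lambda$ and $\delta_0$, and one-parameter families sweeping pointed elliptic, respectively genus $2$, tails against a fixed complementary curve to isolate the coefficients of $\delta_1$ and $\delta_2$. Combining the interior formula with these boundary corrections yields $P=17\lambda-2\delta_0-7\delta_1-9\delta_2$.
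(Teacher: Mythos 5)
The paper's own proof of this lemma is a one-line citation to Eisenbud--Harris, so your determinantal approach is a genuinely different route; unfortunately it has a gap that is not just unfinished boundary bookkeeping. Your interior computation, taken at face value, does not produce the stated class. You correctly find $c_1(\mathcal{K})=-8\lambda+\delta$ on $U$ and identify the discriminant as a section of $(\det\mathbb{E})^{\otimes 2}\otimes\mathcal{K}^{-4}$, but evaluating its class gives
\[
2\lambda-4c_1(\mathcal{K})=34\lambda-4\delta ,
\]
i.e.\ $34\lambda-4\delta_0$ away from $\Delta_1\cup\Delta_2$. Corrections supported on $\Delta_1$ and $\Delta_2$ cannot change the $\lambda$- and $\delta_0$-coefficients, so your method outputs a class whose $(\lambda,\delta_0)$-part is $(34,-4)$ --- exactly twice the $(17,-2)$ you claim to land on. To get $17\lambda-2\delta_0-\cdots$ you would need the discriminant section to vanish to order exactly $2$ along the generic point of $P$. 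This is the crux of the whole computation and it is nowhere addressed in your write-up.

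Worse, that multiplicity is $1$, not $2$, so the factor of two cannot be recovered this way. Test your construction on the pencil $C_s=Q_s\cap F$ obtained by fixing a general cubic surface $F\subset\PP^3$ and letting $Q_s$ run over a general pencil of quadrics: here $\pi_*\omega_{\C/\PP^1}\simeq\O_{\PP^1}(1)^{\oplus 4}$, so $\lambda=4$, and (e.g.\ from $e(\C)=e(F)+12=21$) one gets $\delta_0=33$, $\delta_1=\delta_2=0$. Exactly four members lie on singular (rank-three) quadrics; each is a smooth Petri-special curve, and the pencil $s\mapsto Q_s$ crosses the reduced discriminant quartic in $\PP^9$ transversally at those four points, so the reduced divisor $P$ pulls back to this pencil with degree at least $4$. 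This matches $34\cdot4-4\cdot33=4$ and is incompatible with $17\cdot4-2\cdot33=2$. (As a sanity check that the determinantal method yields the \emph{reduced} class with multiplicity one, the genus-$3$ analogue of your computation gives $Z(\det\mu)=13\lambda-\delta-4\lambda=9\lambda-\delta$ and, after subtracting the order-$2$ vanishing along $\Delta_1$, the correct $\overline{H}_3=9\lambda-\delta_0-3\delta_1$.) So, carried out correctly, your machinery computes the class of the reduced quadric-degeneracy divisor to be $34\lambda-4\delta_0-\cdots$, which is proportional to but not equal to the displayed formula; every downstream use in the paper depends only on the ratios, but your proposal as written asserts a conclusion that its own intermediate formulas contradict. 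You must either exhibit a genuine source for the missing factor of $2$ or acknowledge that what you prove is a class twice the one stated, and reconcile that with the normalization in the cited Eisenbud--Harris theorem.
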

\begin{proof}
This is an instance of Theorem 2 of \cite{EHPetri} for $g=4$. 
%Alternately, one can verify directly that
%elliptic tails and genus $2$ tails avoid the Petri locus. The statement follows.
\end{proof}

\begin{prop}\label{L:discrepancy}
We have 
\begin{align}
f^*\lambda=f^*f_*\lambda &=\lambda+\delta_1+3\delta_2+7P, \\
f^*\delta=f^*f_*\delta &=\delta_0+12\delta_1+30\delta_2+60P=\delta+11\delta_1+29\delta_2+60P.
\end{align}
\end{prop}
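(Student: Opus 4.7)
The key idea is that since $f\co \M_4 \dra M$ is a birational contraction (Proposition~\ref{P:contraction}) whose exceptional locus is the union of the contracted divisors $P$, $\Delta_1$, and $\Delta_2$ (by Theorem~\ref{T:determinacy} together with the GIT analysis of Section~\ref{S:GIT}), the discrepancies $f^*\lambda - \lambda$ and $f^*\delta - \delta_0$ are both supported on $P\cup\Delta_1\cup\Delta_2$. Here the proper transform of $\delta$ under $f^{-1}$ is $\delta_0$ on $\M_4$ because $f$ is an isomorphism at the generic point of $\Delta_0$ (nodal irreducible curves, which correspond to generic nodal $(3,3)$ curves on $\PP^1\times\PP^1$). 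Thus I would start by writing
\begin{align*}
f^*\lambda &= \lambda + aP + b\delta_1 + c\delta_2,\\
f^*\delta &= \delta_0 + a'P + b'\delta_1 + c'\delta_2,
\end{align*}
with six rational coefficients to be determined.

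Next, I would exploit the GIT relation in Corollary~\ref{C:lambda-delta}, namely $\delta = \tfrac{17}{2}\lambda$ on $M$, which gives $f^*\delta = \tfrac{17}{2}f^*\lambda$. Using the Petri formula from Lemma~\ref{L:Petri-divisor} to rewrite $\tfrac{17}{2}\lambda = \delta_0 + \tfrac{1}{2}P + \tfrac{7}{2}\delta_1 + \tfrac{9}{2}\delta_2$, comparing coefficients yields the three relations $a' = \tfrac{1+17a}{2}$, $b' = \tfrac{7+17b}{2}$, and $c' = \tfrac{9+17c}{2}$. Hence it suffices to pin down $a$, $b$, $c$.

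To find these three constants, I would intersect with one test curve contracted by $f$ inside each of $\Delta_1$, $\Delta_2$, and $P$, using $B\cdot f^*\lambda = 0$. A standard pencil $T_1\subset \Delta_1$ of elliptic tails attached to a fixed pointed smooth genus-$3$ curve is a fiber of $f|_{\Delta_1}$ (the varying elliptic tail is absorbed into an $A_2$ singularity, by Table~\ref{table-replacement}); its standard intersection numbers $(T_1\cdot\lambda, T_1\cdot\delta_0, T_1\cdot\delta_1, T_1\cdot\delta_2) = (1,12,-1,0)$ give $T_1\cdot P = 0$ by the Petri formula, and $T_1\cdot f^*\lambda = 0$ forces $b = 1$. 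An analogous pencil $T_2\subset \Delta_2$, contracted because $\Delta_2$ collapses to the maximally degenerate $A_5$-curve, forces $c = 3$; and a test curve $T_P\subset P$ contracted to the triple conic (obtained, e.g., from a one-parameter family of Petri-special canonical genus-$4$ curves in a fixed quadric cone in $\PP^3$) forces $a = 7$.

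The main obstacle is the test curve $T_P$ in $P$: the Petri divisor lacks the clean clutching-product description enjoyed by the boundary $\Delta_i$, so one must exhibit an explicit $1$-parameter Petri-special family for which $\lambda$ and the $\delta_i$ can be computed, and verify that the resulting curve is indeed contracted to the triple-conic point in $M$. Once $a = 7$, $b = 1$, $c = 3$ are in hand, Step~2 delivers $a' = 60$, $b' = 12$, $c' = 30$, and the second formula reads $f^*\delta = \delta_0 + 60P + 12\delta_1 + 30\delta_2 = \delta + 11\delta_1 + 29\delta_2 + 60P$, completing the proof.
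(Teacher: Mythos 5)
Your proposal is correct and follows essentially the same route as the paper: the same ansatz for $f^*\lambda$ and $f^*\delta$ supported on $P\cup\Delta_1\cup\Delta_2$, and the same three contracted test families (elliptic tails, genus-$2$ tails, and a Petri-special pencil on the quadric cone $\PP(1,1,2)$ -- the paper's $T_3$, whose intersection numbers it imports from \cite[Proposition 6.6]{afs} and whose contraction to the triple conic is Theorem \ref{T:determinacy}(1)). The one genuine wrinkle is that you derive $f^*\delta$ from $f^*\lambda$ via the proportionality $\delta=\tfrac{17}{2}\lambda$ on the rank-one $\Pic(M)$ (Corollary \ref{C:lambda-delta}) together with Lemma \ref{L:Petri-divisor}, whereas the paper extracts the $\delta$-coefficients directly from the same test curves; your shortcut uses no new data but serves as a useful consistency check on the paper's computation (and sidesteps its typographical slip $b_1=10$).
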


\begin{proof}
We begin by writing
\begin{align*}
f^*\lambda &=\lambda+a_0\delta_1+b_0\delta_2+c_0P, \\
f^*\delta &=\delta_0+a_1\delta_1+b_1\delta_2+c_1P.
\end{align*}

We will now find indeterminate coefficients by using judiciously chosen test families. 
A care needs to be exercised to use curves $T\subset \M_4$ such that $f$ is defined along $T$
and such that $f(T)$ is a point. We prove that our test families satisfy these requirements in 
Theorem \ref{T:determinacy} below.

\subsection*{Test families}
\subsubsection{Elliptic tails\em:} Our first test family $T_1$ is obtained by attaching the family 
of varying elliptic tails to the general pointed curve of genus $3$. 
The rational map $f\co \M_4\dra M$ is defined in the neighborhood of $T_1$ and contracts
$T_1$ to a point by Theorem \ref{T:determinacy} (2). We also have:
\begin{align}
\lambda\cdot T_1=1, \quad \delta_0\cdot T_1=12, \quad \delta_1\cdot T_1=-1, \quad \delta_2\cdot T_1=P\cdot T_1=0.
\end{align}
%we note that the rational map $\Mg{4}^{ps} \dra V\gitq G$ is well 
%defined at the cuspidal curve with the 
%general normalization because on the general genus $3$ curves $%
%\omega(2p)$ is not a square 
%of a $g^1_3$.
It follows that $a_0=1$ and $a_1=12$. 

\subsubsection{Genus $2$ tails\em:} Consider now
the family $T_2$ of irreducible genus $2$ tails attached 
at non-Weierstrass points.
The intersection numbers of this family are standard and are written down in \cite{handbook} Table 4.2.2:
\begin{align}
\lambda\cdot T_2=3, \quad \delta_0\cdot T_2=30, \quad \delta_1\cdot T_2=0, \quad \delta_2\cdot T_2=-1, 
\quad P\cdot T_2=0.
\end{align}
By Theorem \ref{T:determinacy} (3) we have that $f$ is defined in the neighborhood of $T_2$ 
and $f(T_2)$ is a point. It follows that $b_0=3$ and $b_1=10$.

\subsubsection{Petri curves\em:} We now take the family $T_3$ 
of Petri-special curves on $\PP(1,1,2)$ defined by the 
weighted homogeneous equation 
\[
y^3=x^6+axyz^3+bz^6,
\]
where $[a:b]\in \PP(1,2)$. Evidently, these curves are at worst nodal and avoid the vertex $[0:0:1]$
of the singular quadric. Thus $f$ is defined in the neighborhood of $T_3$
by Theorem \ref{T:determinacy} (1) and $f(T_3)$ is a point.
The intersection numbers of this family are computed in \cite[Proposition 6.6]{afs} and are as follows:
\begin{align}
\lambda\cdot T_3=7, \quad \delta_0\cdot T_3=60, \quad \delta_1\cdot T_3=\delta_2\cdot T_3=0.
\end{align}
From Lemma \ref{L:Petri-divisor}, compute $P\cdot T_3=119-120=-1$. Thus $c_0=7$ and $c_1=60$.
\end{proof}

\begin{corollary}\label{logMMP}
We have $\M_4(\alpha)\simeq M$ for all $\frac{8}{17}<\alpha\leq 29/60$. Moreover, $\M_4(8/17)$ is a point and 
$\M_4(\alpha)=\varnothing$ if $\alpha<\frac{8}{17}$.
\end{corollary}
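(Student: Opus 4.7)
The plan is to express the log canonical class $K_{\Mg{4}} + \alpha\delta = 13\lambda + (\alpha-2)\delta$ on $\Mg{4}$ as the sum of a pullback from $M$ and an effective $f$-exceptional correction. Combining the discrepancy formulas of Proposition \ref{L:discrepancy} with the identifications $\lambda = 4\,\O_M(1)$ and $\delta = 34\,\O_M(1)$ from Corollary \ref{C:lambda-delta}, a direct computation yields
\begin{align*}
K_{\Mg{4}} + \alpha\delta = f^*\bigl(13\lambda + (\alpha-2)\delta\bigr) + (9-11\alpha)\delta_1 + (19-29\alpha)\delta_2 + (29-60\alpha)P,
\end{align*}
with the pulled-back class equal to $2(17\alpha-8)\,\O_M(1)$ on $M$.

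For $\alpha \in (8/17,\,29/60]$, the ample coefficient $2(17\alpha-8)$ is strictly positive and all three exceptional coefficients are non-negative; the binding constraint is $29-60\alpha \geq 0$ since $29/60 < 19/29 < 9/11$. I would then invoke the standard log-canonical-ring argument: passing to a common resolution of $f$ and using that effective $f$-exceptional divisors contribute only their defining section, the log canonical ring on $\Mg{4}$ coincides with the section ring of the ample class $2(17\alpha-8)\,\O_M(1)$ on $M$, whence $\M_4(\alpha) \simeq M$.

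The endpoint cases follow from the same formula. At $\alpha = 8/17$ the pulled-back part vanishes and $K + (8/17)\delta$ is itself an effective $\QQ$-divisor supported on the $f$-exceptional locus $\delta_1 \cup \delta_2 \cup P$ with strictly positive coefficients; its section ring is a polynomial ring in one variable, so $\M_4(8/17)$ is a point. For $\alpha < 8/17$, any effective integral representative of $m(K+\alpha\delta)$ would push under $f$ to an element of $|2m(17\alpha-8)\,\O_M(1)|$, which is empty because the class is anti-ample, so the representative would have to be entirely $f$-exceptional; but the relation $P = 17\lambda - 2\delta_0 - 7\delta_1 - 9\delta_2$ from Lemma \ref{L:Petri-divisor} forces every non-zero effective $f$-exceptional class to have $\lambda$ and $\delta_0$ coefficients in the ratio $17:-2$, which matches $K+\alpha\delta$ only at $\alpha = 8/17$. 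The main step warranting care is the section-ring identification in the main range, but this is routine given that $E_\alpha$ is genuinely $f$-exceptional, a fact guaranteed by Proposition \ref{P:contraction}.
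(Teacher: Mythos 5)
Your proof is correct and follows essentially the same route as the paper: the same discrepancy decomposition from Proposition \ref{L:discrepancy}, the same identification of the pushed-forward class with $\O_M(34\alpha-16)=2(17\alpha-8)\O_M(1)$ via Corollary \ref{C:lambda-delta}, and the same ampleness/vanishing dichotomy at $\alpha=8/17$. The only cosmetic difference is your handling of $\alpha<8/17$ via the $17:-2$ ratio of $\lambda,\delta_0$ coefficients of exceptional classes, where the paper simply notes that the anti-ample class on $M$ has no sections; both are fine.
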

\begin{proof}
Let $f\co \M_4 \dra M$ be the rational contraction. Then 
$$f_*(K_{\Mg{4}}+\alpha\delta)=f_*(13\lambda-(2-\alpha)\delta)=13\lambda-(2-\alpha)\delta.$$
We now compute using Proposition \ref{L:discrepancy}
\begin{align*}
(K_{\Mg{4}}+\alpha\delta)-f^*f_*(K_{\Mg{4}}+\alpha\delta)&=(13\lambda-(2-\alpha)\delta)-f^*f_*(13\lambda-(2-\alpha)\delta) \\ 
&=-13(\delta_1+3\delta_2+7P)+(2-\alpha)(11\delta_1+29\delta_2+60P) \\
&=(29-60\alpha)P+(19-29\alpha)\delta_2+(9-11\alpha)\delta_1.
\end{align*}
This is an effective exceptional divisor as long as $\alpha\leq 29/60$. It follows
that for $\alpha\leq 29/60$:
\begin{align*}
\M_4(\alpha) &=\proj \bigoplus_{m \ge 0} \HH^0(\Mg{g}, m( K_{\Mg{g}} + \alpha \delta))
\\
&= \proj \bigoplus_{m \ge 0} \HH^0(M, mf_*(K_{\Mg{g}} + \alpha \delta)) \\
&=\proj \bigoplus_{m \ge 0} \HH^0(M, m(13\lambda-(2-\alpha)\delta))
=\proj \bigoplus_{m \ge 0} \HH^0(M, \O_{M}(34\alpha-16)),
\end{align*}
where we have used Corollary \ref{C:lambda-delta} in the last step.
The statement now follows from the fact that $\O_{M}(34\alpha-16)$
is ample on $M$ for $\alpha>8/17$ and is a zero line bundle for $\alpha=8/17$. 
\end{proof}
\begin{corollary}[cf. Theorem \ref{T:moving-slope}]\label{C:moving-cone}
There is a moving divisor of class $60\lambda-7\delta_0-24\delta_1-30\delta_2$ on $\M_4$. 
Furthermore, any moving divisor 
$D=a\lambda-b_0\delta_0-b_1\delta_1-b_2\delta_2$ satisfies
$a/b_0\geq 60/7$. In particular, the moving slope of $\M_4$ is $60/7$. 
%Assuming $a/b_0\leq 29/3$, we also have
%$b_1, b_2\geq b_0$. 
\end{corollary}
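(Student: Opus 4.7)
The plan is to produce the moving divisor of class $60\lambda - 7\delta_0 - 24\delta_1 - 30\delta_2$ by pulling back an ample class on $M$ via the birational contraction $f\co \Mg{4}\dra M$, and then to establish the lower bound on the slope by intersecting an arbitrary moving divisor with the Petri test family $T_3$ from the proof of Proposition \ref{L:discrepancy}.

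For the first step, Corollary \ref{C:lambda-delta} gives $60\lambda - 7\delta = \O_M(60\cdot 4 - 7\cdot 34) = \O_M(2)$ on $M$, which is ample. Using the discrepancy formulas of Proposition \ref{L:discrepancy}, I compute
\[
f^*(60\lambda - 7\delta) = 60(\lambda + \delta_1 + 3\delta_2 + 7P) - 7(\delta_0 + 12\delta_1 + 30\delta_2 + 60P) = 60\lambda - 7\delta_0 - 24\delta_1 - 30\delta_2,
\]
with the Petri coefficient $420 - 420 = 0$ cancelling exactly. Since $f$ is a birational contraction by Proposition \ref{P:contraction}, its indeterminacy locus has codimension at least $2$ in $\Mg{4}$, and the stable base locus of the pullback of an ample class on $M$ is contained in this indeterminacy locus. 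Hence $60\lambda - 7\delta_0 - 24\delta_1 - 30\delta_2$ is represented by an effective divisor whose base locus has codimension at least $2$, i.e.\ it is a moving class.

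For the lower bound, let $D = a\lambda - b_0\delta_0 - b_1\delta_1 - b_2\delta_2$ be any moving divisor class. Intersecting $D$ with the one-parameter family $T_3$ of Petri curves on $\PP(1,1,2)$, whose intersection numbers $(\lambda\cdot T_3, \delta_0\cdot T_3, \delta_1\cdot T_3, \delta_2\cdot T_3) = (7, 60, 0, 0)$ are recorded in the proof of Proposition \ref{L:discrepancy}, I obtain
\[
D\cdot T_3 = 7a - 60 b_0.
\]
Although $T_3$ lies in the Petri divisor $P$ (with $P\cdot T_3 = -1$), the curve $T_3$ varies in a family sweeping out a dense open subset of $P$ (obtained by perturbing the weighted-homogeneous coefficients and using the fact that Petri-special curves correspond to cubic sections of a singular quadric). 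Because $D$ is moving, $P$ is not a fixed component of $|mD|$ for any $m$, so a general member of the covering family of $P$ is not contained in the stable base locus of $|D|$, and hence $D\cdot T_3 \geq 0$. This yields $7a \geq 60 b_0$, i.e.\ $a/b_0 \geq 60/7$, with equality attained by the moving divisor constructed above.

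The main obstacle I foresee is justifying that $T_3$ genuinely moves in a covering family of $P$ (so that the moving hypothesis forces $D\cdot T_3 \geq 0$); this is essentially a dimension count for Petri curves on the singular quadric $\PP(1,1,2)$, but has to be done with some care. The exact cancellation of the $P$-coefficient in Step 1 is both the reason the moving divisor exists at the claimed slope and the reason $T_3$ is the natural dual test family, so the two halves of the proof are tightly linked.
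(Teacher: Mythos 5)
Your proposal is correct and follows essentially the same route as the paper: the moving divisor is obtained as $f^*(60\lambda-7\delta)$ with $60\lambda-7\delta=\O_M(2)$ ample on $M$, and the bound $a/b_0\geq 60/7$ comes from intersecting a moving divisor with the Petri test family $T_3$, which covers $P$ and therefore cannot meet $D$ negatively. Your extra care about why the pullback is moving and why $T_3$ sweeps out $P$ only makes explicit what the paper asserts in one line.
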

\begin{proof}
By Proposition \ref{L:discrepancy}, 
$$f^*(60\lambda-7\delta)=60\lambda-7\delta_0-24\delta_1-30\delta_2.$$
By Corollary \ref{C:lambda-delta} the divisor $60\lambda-7\delta=\O_M(2)$ is ample. Since $f$ is a rational contraction,
the divisor $f^*(60\lambda-7\delta)$ is
moving on $\M_4$.

Suppose now $D$ is a moving divisor. Since families $T_1, T_2, T_3$ constructed in the
proof of Proposition \ref{L:discrepancy} are covering families for divisors $\Delta_1, \Delta_2$, 
and $P$, respectively, we have% have 
%$D\cdot T_i\geq 0$ for $i=1,2,3$:
\begin{align*}
D\cdot T_3\geq 0 &\Longrightarrow \frac{a}{b_0}\geq \frac{60}{7}, \\
D\cdot T_2\geq 0 &\Longrightarrow 3a-30b_0+b_2\geq 0, \\
D\cdot T_1\geq 0 &\Longrightarrow a-12b_0+b_1\geq 0. \\
\end{align*}
The statement follows.
\end{proof}

\subsection{A theorem on indeterminacy locus}
Here, we describe loci where the rational map 
$f\co \M_4\dra M=V^\ss\gitq G$ is regular,
completing the proof of Proposition \ref{L:discrepancy}. 
%and thus finishing the proof of the Main Theorem.
Recall that $\phi\co V^\ss \ra M$ is the GIT quotient.
\begin{definition}\label{D:stable-limits}
Let $\M_4 \stackrel{p}{\longleftarrow} Z 
\stackrel{q}{\longrightarrow} M$ be the graph of $f$. Recall that
the variety of stable limits of $[X]\in V^\ss$ is $\Tl_{X}=p(q^{-1}(\phi([X])))$. 
We now define the {\em variety of GIT-semistable
limits} of $[C]\in \M_4$ to be $\D_{C}:=q(p^{-1}([C]))$.
\end{definition}
\begin{remark}
We allow $[X]$ to be a non-closed point of the GIT stack
$[V^\ss / G]$.
\end{remark}

\begin{lemma}\label{L:determinacy-f}
Suppose that for $[C]\in \M_4$, we have $[X]\in \D_C$. Suppose
further that 
for every closed point $[X']$ in a small punctured neighborhood 
of $[X]$ in 
$M$, we have that $[C]\notin \Tl_{X'}$. Then $f$ is defined
at $[C]$ and $f([C])=[X]$. 
\end{lemma}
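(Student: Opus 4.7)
The plan is to show that $\D_C = \{[X]\}$ and then appeal to Zariski's main theorem to deduce that $f$ extends to a morphism at $[C]$ with value $[X]$. The argument breaks naturally into a connectedness step, followed by an application of the second hypothesis to rule out any additional point, followed by a standard ZMT finish.

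For the connectedness step, I would use that $p\co Z\to \M_4$ is proper and birational onto the normal variety $\M_4$, so the Stein factorization form of Zariski's main theorem implies $p^{-1}([C])$ is connected. Because $Z\hookrightarrow \M_4\times M$, the restriction of $q$ to $p^{-1}([C])$ is a closed immersion into $\{[C]\}\times M\cong M$, identifying $p^{-1}([C])$ homeomorphically with $\D_C$; hence $\D_C$ is a connected closed subset of $M$ containing $[X]$. If $\D_C \neq \{[X]\}$, then $[X]$ cannot be isolated in $\D_C$---otherwise the decomposition $\D_C = \{[X]\}\sqcup (\D_C\setminus \{[X]\})$ into two relatively open subsets would disconnect it. So every punctured neighborhood of $[X]$ in $M$ contains a closed point $[X']\in \D_C$. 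Choosing such an $[X']$ inside the punctured neighborhood furnished by the second hypothesis and unwinding definitions, there exists $z\in p^{-1}([C])$ with $q(z)=[X']$, whence $[C]=p(z)\in p(q^{-1}([X']))=\Tl_{X'}$, contradicting $[C]\notin \Tl_{X'}$. Thus $\D_C = \{[X]\}$.

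Having $\D_C = \{[X]\}$ forces $p^{-1}([C])$ to be a single point, so $p$ is quasi-finite at that point, and hence finite on a neighborhood of $[C]$ by ZMT. Since $p$ is birational and $\M_4$ is normal, this forces $p$ to be an isomorphism on such a neighborhood, so $f = q\circ p^{-1}$ extends to a morphism at $[C]$ sending $[C]$ to $[X]$. The one step I would be most careful about is the connectedness of $p^{-1}([C])$: it is essential that $Z$ is the honest graph closure and that $\M_4$ is normal, though both hold in our setting. Everything else is formal manipulation of the definitions of $\D_C$ and $\Tl_{X'}$.
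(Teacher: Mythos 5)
Your proof is correct and follows essentially the same route as the paper's: establish connectedness of $\D_C$ from normality of $\M_4$ via Zariski's main theorem, use the implication $[X']\in\D_C\Rightarrow [C]\in\Tl_{X'}$ together with the hypothesis to show $\D_C$ avoids a punctured neighborhood of $[X]$, and conclude $\D_C=\{[X]\}$. The only difference is that you spell out the final step (a one-point fiber of the proper birational map $p$ forces $p$ to be an isomorphism near $[C]$, so $f=q\circ p^{-1}$ is regular there), which the paper simply asserts.
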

\begin{proof}
Because both $\M_4$ and $M$ are normal varieties (the latter
by \cite[Theorem 1.1]{GIT}), $\D_C$ is connected.
Using the obvious implication
 $$[X']\in \D_C \Longrightarrow [C]\in \Tl_{X'},$$
we conclude that $\D_C$ does not meet a small punctured neighborhood 
of $[X]$.
It follows that $\D_C=[X]$. Thus, $f$ is defined at $[C]$ and $f([C])=[X]$. 
\end{proof}

\begin{definition} We define by $P^\circ \subset \M_4$ 
the locally closed subset of stable curves whose canonical embedding lies on a singular 
quadric, but which do not pass through the vertex of the quadric. 
\par 
We define 
by $\Delta_2^\circ \subset \M_4$ the locally closed subset of stable curves $[E_1\cup E_2]\in \Delta_2$
such that $E_1$ and $E_2$ are irreducible and $E_1\cap E_2$ is not
a Weierstrass point\footnote{A Weierstrass point of an irreducible 
stable curve of genus $2$ is a ramification point
of the canonical $2:1$ map onto $\PP^1$.} either on $E_1$ or
$E_2$. 
\end{definition}

\begin{remark} If $[C]\in P^\circ \cap \Delta_0$ 
is a curve with a single node,
then $\widetilde{C}$ is not hyperelliptic. Conversely,
if $[C]\in \Delta_0\setminus \overline{H}_4$ 
is a curve with a single node and 
$\widetilde{C}$ is hyperelliptic, then $[C]\in P\setminus P^\circ=\Delta_0^{hyp}$ because
the node maps under the canonical embedding to the vertex of the singular quadric containing $C$. 
\end{remark}

%\begin{lemma}[Varieties of stable limits]
%\begin{enumerate}
%\item[]
%\item The variety of stable limits of a triple conic is.
%\item The variety of stable limits of a double conic is .
%\end{enumerate}
%\end{lemma}
\begin{theorem}\label{T:determinacy}
 The rational map $f\co \M_4\dra M$ is regular at the following points:
\begin{enumerate}
\item
All curves in $P^\circ$.  Moreover, $f$ maps $P^\circ$ to the triple conic.
\item All curves in $\Delta_1$
whose genus $3$ component is the general curve in 
$\Mg{3,1}$. 
Moreover, if 
the pointed genus $3$ component is fixed, all curves with varying elliptic tails are mapped to the same 
cuspidal curve in $M$.
\item All curves in $\Delta_2^\circ$. 
%such that two curves are at worst one-nodal irreducible and 
%the points of attachment are not Weierstrass on either of 
%the components. 
Moreover, $f$ maps $\Delta_2^\circ$ to the maximally degenerate $A_5$-curve.
%maps all such points to the maximally degenerate $A_5$-curve.
\end{enumerate} 
\end{theorem}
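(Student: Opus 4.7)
The overall strategy is to apply Lemma \ref{L:determinacy-f} case by case: for each $[C]$ in the asserted locus, identify a candidate target point $[X] \in M$, exhibit a one-parameter family of semistable $(3,3)$-curves realizing $[X] \in \D_C$, and verify that no closed orbit $[X']$ in a small punctured neighborhood of $[X]$ in $M$ has $[C]$ among its stable limits. Part (a) is realized through explicit degenerations controlled by Proposition \ref{P:versality} and the GIT picture of Section \ref{S:GIT}. Part (b) combines Luna's \'etale slice theorem (which identifies a neighborhood of $[X]$ in $M$ with the nearby closed orbits in a slice to the orbit of $X$) with the description of varieties of stable limits of $A$--$D$ singularities in Proposition \ref{P:stable-limits}, together with the tabulation in Section \ref{S:analysis} of which closed-orbit singular $(3,3)$-curves produce which stable limits.

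Case (3) will be the shortest: the candidate is the maximally degenerate $A_5$-curve $X_{A_5}$, whose variety of stable limits is all of $\Delta_2$ by Section \ref{S:analysis}, so $[X_{A_5}] \in \D_C$ is automatic for every $[C] \in \Delta_2^\circ$. The punctured-neighborhood check amounts to observing that, since $X_{A_5}$ is the unique closed orbit of strictly $\rho_{3,1}$-semistable points, the remaining closed orbits near $[X_{A_5}]$ are either the GIT-stable orbits of irreducible curves with a non-separating $A_5$ singularity (whose stable limits are genus $2$ bridges attached at hyperelliptic-conjugate points and so miss $\Delta_2^\circ$) or GIT-stable orbits of curves with only $A_{\le 4}$ singularities (whose stable limits miss $\Delta_2$ altogether). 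Case (2) proceeds analogously: the candidate is the unique $(3,3)$-curve with a single $A_2$ singularity whose pointed normalization equals $(C_1,p)$, which exists for general $(C_1,p) \in \Mg{3,1}$ by a standard embedding argument; Proposition \ref{P:stable-limits}$(\mathrm{A_{\text{even}}})$ with $k=1$ identifies its variety of stable limits with the family $\{[C_1 \cup_p E] : E \text{ elliptic}\}$, and since this candidate is GIT-stable, its small punctured neighborhood in $M$ consists of further GIT-stable orbits whose own stable limits miss the locus $\Delta_1$.

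Case (1) is the main obstacle. The candidate is the triple conic $X_{\mathrm{tr}}$. For the existence step I plan the following construction: canonically embed $C \hookrightarrow Q_0 \subset \PP^3$, where $Q_0$ is a rank-$3$ quadric cone with vertex $v \notin C$, then extend to a flat family of quadrics $Q_t$ with $Q_t$ smooth for $t \neq 0$, together with a flat family $C_t \subset Q_t$ specializing to $C$. After choosing trivializations $Q_t \simeq \PP^1 \times \PP^1$ for $t \neq 0$, the family defines a morphism from a punctured disc to $V/G$. As $t \to 0$ the two rulings of $Q_t$ collide into the single ruling of $Q_0$; rescaling by the one-parameter subgroup $\rho_{1,1}$ of Section \ref{S:numerical-criterion} drives the flat limit into the strictly $\rho_{1,1}$-semistable locus, so by Proposition \ref{P:D-curves-2} the closed orbit in the limit is a D-curve. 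The ``missing the vertex'' hypothesis will force this D-curve to be the triple conic: were the curve to pass through the vertex, the normalization across the vertex would produce an extra node in the limit and land us in a double-conic or two-$D_4$ orbit, which corresponds precisely to $\Delta_0^{\mathrm{hyp}} = P \setminus P^\circ$. For the punctured-neighborhood step, the remaining closed orbits near $X_{\mathrm{tr}}$ are double conics (with varying cross-ratio) and D-curves with two $D_4$ singularities; by Section \ref{S:analysis} their varieties of stable limits are $\Delta_0^{\mathrm{hyp}}$ and the locus of elliptic triboroughs respectively, and neither meets $P^\circ$. The hardest step will be rigorously controlling the $\rho_{1,1}$-limit in the moving-quadric family, which I expect to reduce to a local toric calculation near the vertex of $Q_0$ verifying that the vanishing order of $C$ there is exactly what is needed to place the limit in the triple-conic orbit rather than the double-conic orbit.
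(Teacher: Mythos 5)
Your case (3) is essentially the paper's argument (Lemma \ref{L:determinacy-f} plus Proposition \ref{P:stable-limits}), up to one slip: stable limits of curves with $A_4$ singularities do \emph{not} miss $\Delta_2$ altogether --- they are exactly the Weierstrass genus~$2$ tails --- and the argument survives only because those lie in $\Delta_2\setminus\Delta_2^\circ$. Your case (2) is a genuinely different route: the paper factors $f$ through the known contraction $\M_4\to\M_4^{\,ps}$ to the pseudostable space, after which the ``all elliptic tails go to the same point'' assertion is automatic and one only has to check that the cuspidal curve with general pointed normalization is embedded by $\omega_C$ into a smooth quadric. Your version via Lemma \ref{L:determinacy-f} can be made to work, but two points need repair: the uniqueness of the $A_2$-curve with prescribed pointed normalization is precisely this $\omega_C$-embedding statement and must be proved, and the punctured-neighborhood claim is wrong as written --- nearby cuspidal closed orbits have varieties of stable limits \emph{inside} $\Delta_1$; what you must check is that they miss the particular point $[C_1\cup_p E]$, not the divisor $\Delta_1$.

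The genuine gap is case (1), which is the heart of the theorem. Your existence step is not an argument: $\rho_{1,1}$ is a one-parameter subgroup of $G$ acting on a \emph{fixed} $\PP^1\times\PP^1$, whereas in your moving-quadric family the identification $Q_t\simeq\PP^1\times\PP^1$ degenerates at $t=0$, so there is no single surface on which $\rho_{1,1}$ acts; and Proposition \ref{P:D-curves-2} describes the orbit closure of an individual semistable curve with a multiplicity-$3$ point, so it cannot be invoked to identify the limit cycle of a family of \emph{smooth} $(3,3)$-curves. The computation you defer as ``a local toric calculation'' is in fact the entire content of the statement: one rescales by a one-parameter subgroup of $\mathrm{GL}(4)$ in the $z_3$-direction to normalize the quadric pencil to $z_0^2+z_1^2+z_2^2+t^2z_3^2=0$, blows up the vertex of the total space, and blows down the strict transform $\FF_2$ of $Q_0$ onto its $(-2)$-conic $O$; the new central fiber is the exceptional $\PP^1\times\PP^1$, and the limit cycle is supported on $O$ and of class $(3,3)$, hence is $3O$. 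This works for \emph{every} smoothing, so $\D_C$ is the single point $[3O]$ and Lemma \ref{L:determinacy-f} is not even needed here. Your punctured-neighborhood step is also wrong as stated: a punctured neighborhood of the triple conic in the nine-dimensional $M$ contains mostly GIT-\emph{stable} closed orbits (smooth curves, $A_k$-curves including the $A_8$-locus $A$, nodal curves, \dots), not merely double conics and D-curves, so you would additionally have to rule out $[C]\in\Tl_{X'}$ for all of those --- delicate when $[C]\in P^\circ\cap\Delta_0$. Without the blow-up/blow-down computation the proof of part (1) is missing.
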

\begin{proof} To show that $f$ is defined at a point of $\M_{4}$,
we employ three different techniques: In the case of $P^\circ$,
we define the map explicitly; in the case of $\Delta_1$, we use 
the moduli space of pseudostable curves (see \cite{Schubert, hassett-hyeon_contraction}); 
lastly, in the case of $\Delta_2^\circ$, we use
varieties of stable limits.

\subsubsection*{Curves in $P^\circ$\em:}
Suppose $C$ is a stable curve lying on a rank three quadric 
and avoiding the vertex. We prove that $f$ is defined at $C$ by
showing that for {\em every} smoothing of $C$ away from the Petri locus,
the GIT-semistable
limit is the triple conic. Indeed, let $\C=\{C_t\}$ be a smoothing of $C$, 
with $C_0=C$ and $C_t$ smooth Petri-general curves for all $t\neq 0$.
Realize
$\{C_t\}$ as a family of canonically embedded curves by choosing 
a trivialization of the Hodge bundle. Associated to this family of 
canonical curves is the family of quadrics $\{Q_t\}$, with $Q_0$ a singular
quadric and $Q_t$ a smooth quadric for $t\neq 0$. We choose 
coordinates so that the equation for $Q_t$ is
$$
Q_t: \{z_0^2+z_1^2+z_2^2+t^{a}z_{3}^2=0\},
$$
where we arrange $a$ to be even using a finite base
change. Consider now the one-parameter family $\rho\co \CC[t, t^{-1}]\ra \mathrm{GL}(4)$ given by $\rho(t)=\text{diag}(1,1,1,t^{a/2-1})$. Then the 
family of canonical curves 
$C'_t:=\rho(t)(C_t)$ lies on the family of quadrics
defined by the equation 
$$
Q'_t:\{z_0^2+z_1^2+z_2^2+t^2z_3^2=0\}.
$$
Abstractly, $C'_t\simeq C_t$ for $t\neq 0$ and so the stable limit
of $\{C'_t\}$ at $t=0$ is still $C_0$. The flat limit 
$C'_0:=\lim_{t\to 0} C'_t$ remains $C_0$ if $a=2$, and is the triple conic $z_3=0$
on $Q'_0=Q_0$ if $a\geq 4$. In either case, $C_0'$
does not pass through the vertex of $Q'_0$.
Consider now the blow-up $\X':=\Bl_{p} \X$ of the total space $\X$ of 
$\{Q_t\}$ at $p=[0:0:0:1]$. The exceptional divisor $E$ is isomorphic to $\PP^1\times\PP^1$ and
meets the strict transform of $Q'_0$ in a smooth 
conic $O$. The strict transform of $Q'_0$ 
is isomorphic to $\FF_2$, with 
$O$ being a $(-2)$ curve. 
We now blow-down $\FF_2$ down to $O$.
%(using the linear system). 
The resulting threefold is the total space of the family 
of $\PP^1\times\PP^1$'s
with central fiber $E$.
The flat limit of $\{C'_t\}_{t\neq 0}$ in $E$ 
is now the triple conic $O$.
\begin{remark} It is clear what goes wrong when $[C_0]\in \Delta_0^{hyp}$, i.e. when $C_0$ 
passes through the vertex of $Q_0$. The exceptional divisor $E$
then meets the strict transform $\overline{\{C_t\}}$ in a conic $O'$.
As a result, after the blow-down, the flat limit of 
$\{C'_t\}_{t\neq 0}$ in $E$
is a union of the double conic $O$ and $O'$. 
The indeterminacy of $f$ along $\Delta_0^{hyp}$ 
arises because the cross-ratio (see Remark \ref{R:cross-ratio})
of the double conic curve depends on the smoothing $\C$. 
\end{remark}

\subsubsection*{Curves with elliptic tail\em:} 
Since there exists 
a morphism $\M_4 \ra \M_4^{\, ps}$ (see \cite{hassett-hyeon_contraction}), 
it suffices to show
that the morphism from $\M_4^{\, ps}$ to $M$ is well-defined at the general cuspidal curve.
This immediately follows from the fact that a cuspidal curve $C$ whose
pointed normalization is the general curve in $\Mg{3,1}$ is 
embedded by $\omega_C$ into a smooth quadric in $\PP^3$.

\subsubsection*{Curves in $\Delta_2^\circ$\em:}
Consider the maximally degenerate $A_5$-curve $X$ on
$\PP^1\times\PP^1$. By Proposition \ref{P:stable-limits} 
and Proposition \ref{P:versality}, the variety of stable limits
of $X$ contains all curves $[E_1\cup E_2]\in \Delta_2$
 such that $E_1\cap E_2$ is not a Weierstrass point on 
 either $E_1$ or $E_2$. 
 
 Consider now a small deformation $X'$ of $X$. 
 If one of the $A_5$-singularities
 of $X$ is preserved, then the singularity remains separating on 
 $X'$. It follows that $X'$ is a union of a $(1,0)$-ruling and a residual 
 $(2,3)$-curve tangent to the ruling with multiplicity $3$ at a smooth 
 point. Such a curve is necessarily 
 defined by Equation \eqref{E:semistable31} and hence $[X']=[X]\in M$.
 Suppose both $A_5$ singularities are smoothed in $X'$. Then 
 $X'$ has at worst $A_1, A_2, A_3, A_4$ singularities. 
 By Proposition \ref{P:stable-limits}, the tails of stable limits arising from $A_2$ and $A_3$ 
 singularities can have irreducible components of arithmetic
 genus at most $1$ and the tails of stable limits arising from $A_4$
 singularities can have irreducible components of 
 arithmetic genus $2$ only if the component is attached to the rest of the curve at a Weierstrass point. This shows that $\Tl_{X'}\cap \Delta_2^\circ= \varnothing$. We are done by Lemma \ref{L:determinacy-f}.
\end{proof}

We finish the discussion of the indeterminacy locus of the rational map $f$ by proving the following lemma used in the proof of Theorem \ref{T:flip}.
\begin{lemma}\label{L:stable-limits-double-conic}
Let $C\subset \PP^1\times \PP^1$ be a double conic. % defined by the equation $L_1^2L_2=0$ on $\PP^1\times\PP^1$. 
Then the variety of stable limits of $C$ is contained in $\Delta_0$.
\end{lemma}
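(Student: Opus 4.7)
The plan is to perform explicit stable reduction on a generic smoothing of $C$. Writing $C=V(L_1^2L_2)$ with $C_i=V(L_i)$ and denoting by $p_1,p_2$ the two points of $C_1\cap C_2$, I would consider the generic $1$-parameter smoothing $\mathcal{C}$ defined by $L_1^2L_2+tF=0$, with $F$ a generic $(3,3)$-form. A direct partial-derivative check shows $\mathcal{C}$ has exactly $6$ nodal (i.e., $A_1$) surface singularities, located at the intersections of $C_1$ with $V(F)$, and is smooth elsewhere --- in particular, smooth at $p_1,p_2$ for generic $F$.

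Since the monodromy around $t=0$ of the two sheets of $C_t$ tending to $C_1$ is a transposition, I would then base-change by $t=s^2$. The new total space $\mathcal{C}_s$ defined by $L_1^2L_2+s^2F=0$ becomes singular along the entire curve $C_1\times\{0\}$ with generically transverse nodal cross-section. The key computation is that a single blow-up of $C_1\times\{0\}$ inside the ambient $3$-fold resolves all singularities of $\mathcal{C}_s$: local chart verifications show that the proper transform becomes smooth away from the special points (where $u^2+s^2g=0$ with $g$ a unit resolves to $V(1+\beta^2g)$), at a zero of $F|_{C_1}$ (where $u^2+s^2v=0$ resolves to $V(\alpha^2+v)$), and at the triple points $p_i$ (where $u^2w+s^2=0$ resolves to $V(w+\beta^2)$), and in each case the central fiber becomes reduced with only nodal singularities.

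The remaining step is to recognize the central fiber of this semistable model as $\Sigma\cup\widetilde{C}_2$, where the exceptional component $\Sigma$ is a double cover of $C_1\cong\PP^1$ ramified at $8$ points: the $6$ zeros of $F|_{C_1}$, plus the two points $p_1,p_2$ (ramification at $p_i$ corresponding to the simple pole of the rational function $F/L_2$ on $C_1$ at $p_i$). By Riemann--Hurwitz, $\Sigma$ is a hyperelliptic curve of arithmetic genus $3$. The proper transform $\widetilde{C}_2\cong\PP^1$ attaches to $\Sigma$ at precisely the two ramification points above $p_1,p_2$, making $\widetilde{C}_2$ a rational bridge; contracting it yields the stable limit, namely the irreducible nodal genus-$4$ curve obtained from $\Sigma$ by identifying these two marked points. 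This curve lies in $\Delta_0^{hyp}\subset\Delta_0$; for arbitrary smoothings the stable limits are specializations of these generic ones, hence in $\overline{\Delta_0^{hyp}}\subset\Delta_0$. The main obstacle I foresee is the analysis at the triple points $p_i$: one must verify both that the single ambient blow-up resolves the non-isolated singularity $u^2w+s^2=0$ there, and that the resulting exceptional $\PP^1$ glues correctly with the generic double-cover picture so as to contribute ramification rather than two separate sheets over each $p_i$.
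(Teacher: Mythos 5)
Your explicit stable reduction of the generic pencil $L_1^2L_2+tF=0$ checks out: after the base change $t=s^2$ and the blow-up of $C_1\times\{0\}$, the central fiber is indeed a genus~$3$ double cover $\Sigma\to C_1$ branched at the six points of $C_1\cap V(F)$ and at $p_1,p_2$, together with the rational bridge $\widetilde{C}_2$ attached at the two ramification points over $p_1,p_2$; contracting the bridge yields an irreducible nodal curve obtained by gluing two Weierstrass points of the hyperelliptic curve $\Sigma$. The gap is your final sentence. It is not true that the stable limit along an arbitrary arc is a specialization of the stable limit along a generic pencil, and here the failure is quantifiable: the limits you produce (a hyperelliptic genus~$3$ curve glued at two of its \emph{Weierstrass} points) sweep out only a $5$-dimensional locus, whereas the variety of stable limits of the double conic contains all of $\Delta_0^{hyp}$ (hyperelliptic genus~$3$ normalization glued at two \emph{arbitrary} points), which is $7$-dimensional --- it is divisorial in the Petri divisor, and it lies in $\Tl_C$ because $D_8$-curves isotrivially specialize to double conics (Section \ref{S:D-8}). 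Those extra limits arise from non-generic arcs, e.g.\ arcs whose first-order term $F$ is divisible by $L_1$, so that the arc approaches $[C]$ tangentially to the locus of curves containing the conic $C_1$; your computation says nothing about these, so the containment $\Tl_C\subset\Delta_0$ is not established. (Indeed, your claim taken literally would force $\dim\Tl_C\le 5$, contradicting $\Delta_0^{hyp}\subseteq\Tl_C$.)

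The paper sidesteps the globalization problem entirely: it degenerates the double conic isotrivially, via a one-parameter subgroup acting on the quadric in $\PP^3$, to a curve $C_0$ which is a union of a double conic and two rulings of the quadric cone meeting in a node; since isotrivial specialization only enlarges the variety of stable limits, $\Tl_C\subseteq\Tl_{C_0}$, and the node of $C_0$ --- non-separating, with partial normalization of arithmetic genus $3$ --- persists in every stable limit of $C_0$, forcing $\Tl_{C_0}\subseteq\Delta_0$. This also automatically handles the stable limits of every semistable curve whose orbit closure contains the double conic (e.g.\ $D_8$-curves), which is what Theorem \ref{T:flip} actually needs via Definition \ref{D:stable-limits}. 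To repair your approach you would have to stratify all such arcs and repeat the reduction on each stratum; the isotrivial degeneration is the cheaper route, while your computation remains valuable as an identification of the \emph{generic} stable limit.
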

\begin{proof}
Consider a smooth quadric 
$\PP^1\times\PP^1 \simeq Q \subset \PP^3$ and choose projective coordinates $[z_0:z_1:z_2:z_3]$
on $\PP^3$ so that the double conic $C$ is cut out by $z_0^2z_1=0$ on $Q$ and so that $[1:0:0:0]\notin Q$. 
Consider now a one-parameter subgroup $\rho\co \spec \CC[t,t^{-1}] \ra \PGL(4)$ acting by 
\[
t\cdot [z_0:z_1:z_2:z_3]=[z_0: tz_1: tz_2: tz_3].
\]
Then $C_0:=\lim_{t\to 0}\rho(t)\cdot C$ is a genus $4$ curve lying on a singular quadric $Q_0=\lim_{t\to 0}\rho(t)\cdot Q$
with a vertex at $[1:0:0:0]$.
Evidently, $C_0$ is a union of a double conic (note that the double conic of $C$ is fixed under $\rho(t)$) and two rulings of $Q_0$ meeting 
in a node at $[1:0:0:0]$. 
Since $C_0$ is a flat degeneration of $C$ in an isotrivial family, it follows that the variety of stable limits of $C$
is contained inside the variety of stable limits of $C_0$. It remains to observe that since $C_0$ has a node and 
the partial normalization of $C_0$ at this node has arithmetic genus $3$, every stable limit of $C_0$ also has a non-separating node.
This finishes the proof.
\end{proof}

\begin{theorem}[Flip of the hyperelliptic locus]\label{T:flip}
The hyperelliptic locus $\overline{H}_4$ is flipped by $f$ to the one dimensional locus
$A:=\overline{\{\text{curves with an $A_8$ singularity}\}}$, i.e.
the total transform of the generic point of $\overline{H}_4$ is $A$.
\end{theorem}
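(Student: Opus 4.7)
The plan is to prove the equivalent statement that for a generic point $[C] \in \overline{H}_4$, the variety of GIT-semistable limits $\D_C$ of Definition \ref{D:stable-limits} equals $A$; this amounts to showing that the total transform of the generic point of $\overline{H}_4$ under $f$ is $A$, i.e.\ that $f$ flips $\overline{H}_4$ to $A$. I will establish the two containments $A \subseteq \D_C$ and $\D_C \subseteq A$ separately.

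For $A \subseteq \D_C$, I invoke Proposition \ref{P:stable-limits}~$\mathrm{(A_{\text{even}})}$ with $k = 4$: a general $A_8$-curve $X$ has irreducible rational normalization and a unibranch singularity with $\delta = 4$ and $\gamma = 4$, so its variety of stable limits $\Tl_X$ is the full hyperelliptic locus $\overline{H}_4$. Consequently $[C] \in \Tl_X$, i.e.\ $[X] \in \D_C$. Since $\D_C$ is closed in $M$ (it is the image of the closed set $p^{-1}([C])$ under the proper projection $q$ from the graph of $f$) and $A$ is by definition the closure of the locus of $A_8$-curves in $M$, this yields $A \subseteq \D_C$.

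For the harder inclusion $\D_C \subseteq A$, I classify the closed orbits $[X] \in V^{ss}$ with $[C] \in \Tl_X$ and show each such $[X]$ lies in $A$. By Section \ref{S:analysis}, the strictly semistable closed orbits are: the triple conic (which lies in $A$); the maximally degenerate $A_5$-curve, with $\Tl_X = \Delta_2$ by Proposition \ref{P:stable-limits}~$\mathrm{(A_{\text{odd}})}$; D-curves, with $\Tl_X$ the locus of elliptic triboroughs by Proposition \ref{P:stable-limits}~$\mathrm{(D_{\text{even}})}$; and double conics, with $\Tl_X \subseteq \Delta_0$ by Lemma \ref{L:stable-limits-double-conic}. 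Since a generic hyperelliptic $[C]$ is smooth and irreducible, only the triple conic is compatible. For the stable orbits, a smooth $(3,3)$ curve is non-hyperelliptic (it carries two $g^1_3$'s), whereas for a singular stable $(3,3)$ curve $X$, the bound $\sum_p \delta(p) \leq p_a(X) = 4$ combined with Proposition \ref{P:stable-limits} applied at each singularity forces the stable limit to be a smooth irreducible genus $4$ hyperelliptic curve only when the data comes from a single singularity whose tail is a hyperelliptic genus $4$ curve and whose normalization destabilizes entirely. This isolates exactly $A_8$ (unibranch, $\widetilde X$ irreducible rational) and the separating $A_9$ (two branches, $\widetilde X$ a disjoint union of two rational components), both lying in $A$. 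The main obstacle is to rule out multi-singular configurations such as $A_7 + A_1$, $2A_3$, $4A_1$, or $D$-type combinations, but this enumeration is finite (bounded by $\sum_p \delta(p) \leq 4$) and in each case Proposition \ref{P:stable-limits} places $\Tl_X$ inside a proper boundary stratum of $\M_4$ that does not meet the generic smooth hyperelliptic point.
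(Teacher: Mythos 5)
Your proposal is correct and takes essentially the same route as the paper: Proposition \ref{P:stable-limits}~$(\mathrm{A}_{\text{even}})$ yields $A\subseteq\D_C$, and the reverse inclusion follows by checking every closed orbit outside $A$ (D-curves, double conics, the maximally degenerate $A_5$-curve, and stable curves with at worst $A_7$ singularities) against Proposition \ref{P:stable-limits} and Lemma \ref{L:stable-limits-double-conic}. The only slip is the stated bound $\sum_p\delta(p)\le p_a(X)=4$, which holds for irreducible curves but not for reducible ones (the $A_9$-curve has $\delta=5$); since you treat the $A_9$ case explicitly, this does not affect the argument.
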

\begin{proof}
Recall from Section \ref{S:A-curves} that $A\subset M$ 
is a curve passing 
through the triple conic and through the unique $A_9$-point, and
smooth away from these two points. (It is not 
hard to see that $A$ is a rational curve, but we do not use this
fact.) As we have already observed, the variety of stable
limits of every $A_8$ curve is $\overline{H}_4$ by 
Proposition \ref{P:stable-limits} $(\mathrm{A_{\text{even}}})$. 
It remains to show that for any curve $X\in M\setminus A$, the 
variety of stable limits $\Tl_{X}$ does not pass through the 
generic point of $\overline{H}_4$. This is analogous to the proof
of Theorem \ref{T:determinacy} (3): Every closed semistable curve not
in $A$ is either a double conic, or a $D_4$-curve, or has at worst $A_7$ singularities. But by
Proposition \ref{P:stable-limits}, the general hyperelliptic 
curve does not lie in the variety of stable limits of a $D_4$-curve or a $(3,3)$ curve
with at worst $A_7$ singularities. Finally, the variety of stable limits of a double conic is contained in $\Delta_0$ 
by Lemma \ref{L:stable-limits-double-conic} and 
hence also does not contain the general hyperelliptic curve.
\end{proof}
\begin{remark}
As we have seen in Sections \ref{S:A67-curves}, 
the flipping loci of special
closed subvarieties of $\overline{H}_4$ 
do lie outside of $A$. For example, hyperelliptic
curves in $\Delta_1$ are flipped to curves with $A_6$ singularities, etc.
\end{remark}

\section{Concluding remarks}

We conclude that the log canonical model $\M_4(\alpha)$ satisfies
the modularity principle for the log MMP for $\M_4$ for $\alpha\in [0,29/60)$. 
%(see \cite{afs} for the statement of the principle). 
 The final non-trivial log canonical model $\M_4(29/60)$ 
also exhibits behavior that we expect of log canonical models with 
$\alpha>29/60$. Namely, for 
all $8/17< \alpha<5/9$:
\begin{enumerate}
\item Hyperelliptic curves are replaced by curves with $A_8$ (and $A_9$) singularities.
\item Curves with elliptic triboroughs 
%(i.e. two elliptic components meeting in three points) 
are replaced by curves with two $D_4$ singularities. 
\item General curves in $\Delta_2$ are replaced by a maximally degenerate $A_5$-curve.
%by maximally degenerate $A_5$-curves. 
\end{enumerate}
We finish by noting that although this paper confirms the above assertions only for $8/17<\alpha\leq 29/60$, some of its results
are readily extended to higher values of $\alpha$. For example,
the canonically embedded
maximally degenerate $A_5$-curve (see \eqref{E:semistable31})
is defined by the ideal 
$(z_0z_3-z_1z_2, z_1^2z_3+z_2^2z_0)$ in $\PP^3$ 
and has a semistable $m^{th}$ Hilbert point for 
all $m\geq 3$ \cite{AFS-even}. 
This suggests that the maximally
degenerate $A_5$-curve replaces $\Delta_2$ in all of GIT quotients
$\Hilb_{4,1}^{\, m, \ss}\gitq \SL(4)$, and hence in
all log canonical models $\M_4(\alpha)$ with $8/17<\alpha\leq 5/9$.

\subsection*{Acknowledgements}
This work was motivated by a question of Gabi Farkas, 
to whom we are grateful. We also thank an anonymous referee for a careful reading of the paper and 
useful suggestions on how to improve the exposition.
\bibliographystyle{alpha}
\bibliography{stability-bib}

\end{document}